\newtheorem{THM}{Theorem}
\newtheorem{LMA}[THM]{Lemma}
\newtheorem*{DEF}{Definition}
\crefname{THM}{theorem}{theorems}
\crefname{LMA}{lemma}{lemmas}
\crefname{equation}{equation}{equation}
\numberwithin{THM}{section}
\numberwithin{equation}{section}
\newcommand{\RR}{\mathbb{R}}
\newcommand\ov[1]{\overline{#1}}
\newcommand{\lb}{\lbrack}
\newcommand{\rb}{\rbrack}
\newcommand\wt[1]{{\widetilde{#1}}}
\newcommand\half{\frac{1}{2}}
\DeclarePairedDelimiter{\floor}{\lfloor}{\rfloor}
\newcommand\restr[2]{{
  \left.\kern-\nulldelimiterspace
  #1
  \vphantom{\big|}
  \right|_{#2}
  }}
\begin{document}
\title[Quasi-Newton Methods without Line Searches]{Quasi-Newton methods: Superlinear Convergence Without Line Searches for Self-Concordant Functions}
\author{Wenbo Gao$^{\ast\dagger}$ and Donald Goldfarb$^\dagger$}
\address{Industrial Engineering and Operations Research, Columbia University \\ 500 W 120th St., New York, NY 10027}
\subjclass[2010]{90C53, 90C30} 
\date{June 03, 2018.}
\thanks{$^\ast$Corresponding author.}
\thanks{$^\dagger$Research of this author was supported in part by NSF Grant CCF-1527809.}
\email{\texttt{wg2279@columbia.edu, goldfarb@columbia.edu}}
\begin{abstract}
We \textcolor{black}{consider the use of} a \emph{curvature-adaptive} step size \textcolor{black}{in} gradient-based iterative methods, including quasi-Newton methods, for minimizing self-concordant functions, extending an approach first proposed for Newton's method by Nesterov. This step size has a simple expression that can be computed analytically; hence, line searches are not needed. We show that using this step size in the BFGS method (and quasi-Newton methods in the Broyden convex class other than the DFP method) results in superlinear convergence for strongly convex self-concordant functions. We present numerical experiments comparing gradient descent and BFGS methods using the curvature-adaptive step size to traditional methods on \textcolor{black}{deterministic} logistic regression problems, \textcolor{black}{and to versions of stochastic gradient descent on stochastic optimization problems.}
\end{abstract}
\maketitle

\section{Introduction}
We are concerned in this paper with iterative optimization algorithms, which at each step, first select a \emph{direction} $d_k$ and then determine a \emph{step size} $t_k$. Such algorithms, which are usually referred to as \emph{line search} algorithms, need to choose an appropriate step size $t_k$ to perform well, both in theory and in practice.

Theoretical proofs of global convergence generally assume one of the following approaches for selecting the step sizes:
\begin{enumerate}
	\item{The step sizes are obtained from line searches.}
	\item{The step size is a constant, often chosen `sufficiently small'.}
\end{enumerate}
Inexact line searches, and in particular those that choose steps that satisfy the Armijo-Wolfe conditions, or just the latter combined with backtracking, are usually performed and work well in practice. However, they can be costly to perform, and are often prohibitively costly for many common objective functions such as those that arise in machine learning, computer vision, and natural language processing. Moreover, in stochastic optimization algorithms, line searches based on stochastic function values and gradients, which are only estimates of the true quantities (see \Cref{further}), can be meaningless. In contrast, constant step sizes $t_k = t$ for all $k$ require no additional computation beyond selecting $t$, but determining an appropriate constant $t$ may be difficult. The value of $t$ required in the theoretical analysis is often too small for practical purposes, and moreover, is impossible to compute without knowledge of unknown parameters (e.g. the Lipschitz constant of $\nabla f$). A single constant step size may also be highly suboptimal, as the iterates transition between regions with different curvature.

The basic idea for a step size determined by the local curvature of the objective function $f$ was developed by Nesterov, who introduced the \emph{damped Newton method} \cite{N_ICP}. This idea is closely related to a well-behaved class of functions known as \emph{self-concordant functions} \cite{NN}, which we define in \Cref{scf_basic}. When applied to a self-concordant function $f$, the damped Newton method is globally convergent and locally converges quadratically. These results were extended in recent work.
\begin{enumerate}
	\item Tran-Dinh et al. \cite{TRANDINH} propose a 
	proximal framework for composite self-concordant minimization, which includes proximal damped Newton, proximal quasi-Newton, and proximal gradient descent. They establish that proximal damped Newton is globally convergent and locally quadratically convergent, and that proximal damped gradient descent is globally convergent and locally linearly convergent. However, they do not propose a proximal quasi-Newton algorithm or prove global convergence for a generic version of such an algorithm.
	\item Zhang and Xiao \cite{ZX} propose a distributed method for self-concordant empirical loss functions, based on the damped Newton method, and establish its convergence.
	\item Lu \cite{LZS} proposes a randomized block proximal damped Newton method for composite self-concordant minimization, and establishes its convergence.
\end{enumerate}

While the damped Newton method has been extensively studied, no comparable theory exists for quasi-Newton methods in the self-concordant setting. It is well known that for convex functions, proving global convergence for the BFGS method \cite{Broyden,Fletcher,G1970,shanno1970conditioning} with inexact line searches is far more challenging than proving global convergence for scaled gradient methods, and that a similar statement holds for the $Q$-superlinear convergence of the BFGS method applied to strongly convex functions compared with, for example, proving $Q$-quadratic convergence of Newton's method. With regard to $Q$-superlinear convergence, it is well known \cite{POW} that if the the largest eigenvalue of the Hessian of the objective is bounded above, and if the sum of the distances of the iterates generated by the BFGS method from the global minimizer is finite, then the BFGS method converges $Q$-superlinearly. We note that Tran-Dinh et al. \cite{TRANDINH} give a proof of this local result for their ``pure"-proximal-BFGS method (i.e., one that uses a step size of $1$ on every iteration and starts from a point ``close" to the global minimizer), but they do not prove that this method generates iterates satisfying the required conditions. This leaves open the question of how to design a globally convergent ``dampled'' version of the BFGS method for self-concordant functions. In particular, we wish to avoid assuming either the Dennis-Mor\'{e} condition \cite{DM1} or the summability of the distances to the global minmizer, since these conditions are extremely strong, verging on being tautological, as assumptions.

In this paper 
we extend the theory of self-concordant minimization developed by Nesterov and Nemirovski \cite{NN}
and further developed by 
Tran-Dinh et al. \cite{TRANDINH}.
Our focus here is mainly on filling the gap in this theory for quasi-Newton methods. To simplify the presentation, we consider only quasi-Newton methods that use the BFGS update, although our results apply to all methods in the Broyden class of quasi-Newton methods other than the DFP method \cite{davidon59,fp63}. We introduce a framework for non-composite optimization; i.e., we do not consider proximal methods as in \cite{TRANDINH}. The key feature of this framework is a step size that is optimal with respect to an upper bound on the decrease in the objective value, which we call the \emph{curvature-adaptive step size}. We use the term curvature-adaptive, or simply \emph{adaptive}, to refer to this step size choice or to methods that employ it, so as not to confuse such methods with damped BFGS updating methods (e.g., see \cite[\S18.3]{NW}), which are unrelated.

We first prove that scaled gradient methods that use the curvature-adaptive step size are globally $R$-linearly convergent on strongly convex self-concordant functions. We note that in \cite{TRANDINH}, this step size is also identified, but that the $R$-linear convergence is only proved locally. We then prove our main result, on quasi-Newton methods: that the BFGS method, using this step size, is globally convergent for functions that are self-concordant, bounded below, and have a bounded Hessian, and furthermore, is $Q$-superlinearly convergent when the function is strongly convex and self-concordant. For completeness, we then present several numerical experiments which shed insight on the behavior of adaptive methods. \textcolor{black}{These show that for deterministic optimization, using curvature-based step sizes in quasi-Newton methods is dominated by using inexact line searches, whereas in stochastic settings, using curvature-based step sizes is very helpful compared to constant step sizes.}

Our paper is organized as follows. In \Cref{prelim}, we introduce the notation and assumptions that we use throughout the paper. In \Cref{scf_basic}, we define the class of self-concordant functions and describe their essential properties. In \Cref{adapted}, we introduce our 
framework for self-concordant minimization and \textcolor{black}{provide a derivation of} what we call the \emph{curvature-adaptive} step size\textcolor{black}{, which extends the curvature-based step size obtained in \cite{TRANDINH} for proximal gradient methods}. In \Cref{boundedH}, we apply our approach to scaled gradient methods, and give a simple proof that these methods are globally $R$-linearly convergent on strongly convex self-concordant functions. In \Cref{BFGS}, we present our main results. Specifically, we prove there that the BFGS method with curvature-adaptive step sizes is globally and $Q$-superlinearly convergent. In \Cref{numerical}, we present numerical experiments testing our new methods on logistic regression problems in the deterministic setting. In \Cref{further}, we discuss stochastic extensions of adaptive methods. \textcolor{black}{In \Cref{apdx:stoch}, we provide a numerical example of solving an online stochastic problem using stochastic adaptive methods.}

\section{Preliminaries}\label{prelim}
We use $f: \RR^n \rightarrow \RR$ to denote the objective function, and $g(\cdot), G(\cdot)$ denote the gradient $\nabla f(\cdot)$ and Hessian $\nabla^2 f(\cdot)$, respectively. In the context of a sequence of points $\{x_k\}_{k=0}^\infty$, we write $g_k$ for $g(x_k)$ and $G_k$ for $G(x_k)$. Unless stated otherwise, the function $f$ is assumed to have continuous third derivatives (as $f$ is generally assumed to be self-concordant), which we write as $f \in \mathcal{C}^3$.

The norm $\|\cdot\|$ denotes the 2-norm, and when applied to a matrix, the operator 2-norm. 

\section{Self-Concordant Functions}\label{scf_basic}
The notion of \emph{self-concordant} functions was first introduced by Nesterov and Nemirovski \cite{NN} for their analysis of Newton's method in the context of interior-point methods. Nesterov \cite{N_ICP} provides a clear exposition and motivates self-concordancy by observing that, while Newton's method is invariant under affine transformations, the convergence analysis makes use of norms which are \emph{not} invariant. To remedy this, Nesterov and Nemirovski replace the Euclidean norm by an invariant local norm, and replace the assumption of Lipschitz continuity of the Hessian $G(x)$ by the self-concordancy of $f$.
\begin{DEF}
	Let $f$ be a convex function. The \emph{local norm} of $h \in \RR^n$ at a point $x$ where $G(x) \succ 0$ is given by
	$$ \|h\|_x = \sqrt{ h^TG(x)h}.$$
\end{DEF}
\begin{DEF}
	A closed convex function $f: \RR^n \rightarrow \RR$ is \emph{self-concordant} if $f \in \mathcal{C}^3$ and there exists a constant $\kappa$ such that for every $x \in \RR^n$ and every $h \in \RR^n$, we have
	$$|\nabla^3f(x) \lb h,h,h \rb| \leq \kappa ( \nabla^2 f(x) \lb h,h \rb)^{3/2}.$$
	If $\kappa = 2$, $f$ is \emph{standard self-concordant}. Any self-concordant function can be scaled to be standard self-concordant; the scaled function $\frac{1}{4}\kappa^2 f$ is standard self-concordant. Hence, we assume all self-concordant functions have $\kappa = 2$, unless stated otherwise.
\end{DEF}
There is also an equivalent definition which is frequently useful.
\begin{THM}[Lemma 4.1.2, \cite{N_ICP}]\label{equiv_scf}
	A closed convex function $f$ is self-concordant if and only if for every $x \in \RR^n$ and all $u_1, u_2, u_3 \in \RR^n$, we have
	$$| \nabla^3f(x) \lb u_1, u_2, u_3 \rb| \leq 2 \prod_{i=1}^3  \|u_i\|_x.$$
\end{THM}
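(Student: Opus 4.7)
The plan is to handle both directions. The easier implication --- from the trilinear bound to the one-dimensional self-concordance definition --- is immediate: setting $u_1 = u_2 = u_3 = h$ in the hypothesis gives $|\nabla^3 f(x)[h,h,h]| \leq 2\|h\|_x^3 = 2(\nabla^2 f(x)[h,h])^{3/2}$.

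For the nontrivial converse, I would fix $x$ and write $T(u_1,u_2,u_3) := \nabla^3 f(x)[u_1,u_2,u_3]$ and $B(u,v) := u^T G(x) v$. When $G(x) \succ 0$, $B$ endows $\RR^n$ with a Hilbert-space structure whose induced norm is precisely $\|\cdot\|_x$, and the task becomes to upgrade the diagonal bound $|T(h,h,h)| \leq 2\|h\|_x^3$ to the full trilinear bound $|T(u_1,u_2,u_3)| \leq 2\|u_1\|_x\|u_2\|_x\|u_3\|_x$. This is an instance of Banach's classical theorem that for a symmetric $k$-linear form on a real Hilbert space, the full operator norm equals the supremum of $|T(h,\ldots,h)|$ taken over unit vectors. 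I would prove it variationally: let $(u_1^*,u_2^*,u_3^*)$ maximize $T/(\|u_1\|_x\|u_2\|_x\|u_3\|_x)$ over the product of unit $\|\cdot\|_x$-spheres (compactness guarantees existence), with maximum value $\tau$. Lagrange stationarity, with Riesz representation in the $B$-inner product, forces $T(\cdot, u_2^*, u_3^*) = \tau\, B(\cdot, u_1^*)$ together with the two cyclic analogues. Plugging suitable combinations $\alpha u_1^* + \beta u_2^* + \gamma u_3^*$ into the diagonal bound and using these stationarity relations then reduces the task to showing $\tau \leq 2$.

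The degenerate case, where $G(x)$ has a nontrivial kernel, I would reduce to the nondegenerate case as follows. If $B(u,u) = 0$, Cauchy--Schwarz for the PSD form $B$ implies $B(u,v) = 0$ for every $v$, so applying the hypothesis to $h = u + sv$ gives $|T(u+sv, u+sv, u+sv)| \leq 2(s^2 B(v,v))^{3/2}$. Expanding the left side in $s$ and matching powers as $s \to 0$ forces $T(u,v,v) = 0$, and bilinear polarization in the remaining two slots extends this to $T(u,v,w) = 0$ for every $v,w$. Hence $T$ descends to the range of $G(x)$, on which the positive-definite case applies.

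The main obstacle is securing the \emph{sharp} constant $2$ in Banach's theorem. A naive use of the polarization identity $6\,T(u_1,u_2,u_3) = P(u_1+u_2+u_3) - \sum_{i<j} P(u_i+u_j) + \sum_i P(u_i)$, where $P(h) = T(h,h,h)$, combined with the diagonal bound and the triangle inequality, gives only a constant strictly larger than $2$ on the product of unit spheres (a crude count yields $|T| \leq 18$). Recovering the tight constant genuinely requires exploiting the Hilbert-space structure --- either via the variational argument sketched above, or equivalently by first establishing the mixed bound $|T(h,u,u)| \leq 2\|h\|_x\|u\|_x^2$ and then polarizing in the last two slots.
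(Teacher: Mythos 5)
The paper offers no proof of this statement to compare against: it is quoted verbatim as Lemma 4.1.2 of Nesterov's book and used as a black box. Your outline is essentially the standard argument behind that citation, and its structure is sound: the forward implication by taking $u_1=u_2=u_3=h$; the degenerate case by showing $\nabla^3 f(x)[u,\cdot,\cdot]=0$ whenever $G(x)u^Tu=0$ (your expansion in $s$ is correct --- Cauchy--Schwarz for the PSD form gives $B(u,v)=0$, the right-hand side is $O(|s|^3)$, so the $s^0,s^1,s^2$ coefficients vanish, and polarization kills the remaining slots); and the nondegenerate case via the classical fact that a symmetric trilinear form on a real Hilbert space attains its norm on the diagonal. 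You also correctly identify the real content of the lemma, namely that naive polarization degrades the constant and the Hilbert structure must be exploited.

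The one thin spot is the finish of your variational argument. The stationarity relations $T(\cdot,u_2^*,u_3^*)=\tau B(\cdot,u_1^*)$ and its cyclic analogues control only the mixed terms $T(u_i^*,u_i^*,u_j^*)$ with $i\neq j$; the pure diagonal terms $T(u_i^*,u_i^*,u_i^*)$ that appear when you expand $P(\alpha u_1^*+\beta u_2^*+\gamma u_3^*)$ are \emph{not} determined by stationarity, so ``plugging suitable combinations'' does not obviously close the argument as written. The clean way to finish is the collapsing argument: for a maximizer $(u,v,w)$ on the product of unit spheres with value $\tau$, write $T(u,v,w)=\tfrac14\bigl(T(u+v,u+v,w)-T(u-v,u-v,w)\bigr)$, bound each term by $\tau\|u\pm v\|_x^2$, and use the parallelogram law $\|u+v\|_x^2+\|u-v\|_x^2=4$ to force equality, so that $\bigl((u+v)/\|u+v\|_x,(u+v)/\|u+v\|_x,w\bigr)$ is again a maximizer; iterating (with a small compactness argument to get all three arguments to coincide simultaneously) reduces to the diagonal, where the hypothesis gives $\tau\le 2$. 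Your alternative route --- first the mixed bound $|T(h,u,u)|\le 2\|h\|_x\|u\|_x^2$, then parallelogram polarization in the last two slots --- does yield the sharp constant in the final step, but the mixed bound is itself the crux and requires essentially the same collapsing argument; it is not free. With that step filled in, the proof is complete.
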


The next inequalities are fundamental for self-concordant functions. These results are well known (see \cite[\S 4.1.4]{N_ICP}), but for completeness, we provide a proof.
\begin{LMA}\label{scineq}
	Let $f$ be standard self-concordant and strictly convex, and let $x \in \RR^n$ and $0 \neq d \in \RR^n$. Let $\delta = \|d\|_x$. Then for all $t \geq 0$,
	\begin{equation}\label{downf} f(x+td) \geq f(x) + tg(x)^Td + \delta t - \log (1 + \delta t) \end{equation}
	and
	\begin{equation}\label{downg} g(x+td)^Td \geq g(x)^Td + \frac{\delta^2t}{1 + \delta t}.\end{equation}
	For all $0 \leq t < \frac{1}{\delta}$,
	\begin{equation}\label{upf}  f(x+td) \leq f(x) + tg(x)^Td - \delta t - \log (1 - \delta t) \end{equation}
	and
	\begin{equation}\label{upg} g(x+td)^Td \leq g(x)^Td + \frac{\delta^2t}{1 - \delta t}.\end{equation}
\end{LMA}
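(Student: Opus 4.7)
The plan is to reduce everything to a one-dimensional analysis along the ray $x+td$, then apply the self-concordance inequality to a simple scalar ODE. Define $\phi(t) = f(x+td)$, so that $\phi'(t) = g(x+td)^T d$, $\phi''(t) = d^T G(x+td) d = \|d\|_{x+td}^2 \geq 0$, and $\phi'''(t) = \nabla^3 f(x+td)[d,d,d]$. By \Cref{equiv_scf} with $u_1=u_2=u_3=d$, standard self-concordance gives $|\phi'''(t)| \leq 2\phi''(t)^{3/2}$. In particular $\phi''(0) = \delta^2$, and since $f$ is strictly convex we have $\phi''(t)>0$ throughout, so we can safely work with $\phi''(t)^{-1/2}$.

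The key trick is to observe that
\[
\frac{d}{dt}\,\phi''(t)^{-1/2} = -\tfrac{1}{2}\,\phi''(t)^{-3/2}\,\phi'''(t),
\]
which has absolute value at most $1$ by the self-concordance bound. Integrating this scalar inequality from $0$ to $t$ yields
\[
\tfrac{1}{\delta} - t \;\leq\; \phi''(t)^{-1/2} \;\leq\; \tfrac{1}{\delta} + t,
\]
so for the lower envelope we get $\phi''(t) \geq \delta^2/(1+\delta t)^2$ for all $t \geq 0$, and for the upper envelope we get $\phi''(t) \leq \delta^2/(1-\delta t)^2$, the latter valid only for $t < 1/\delta$ (where the left bound on $\phi''(t)^{-1/2}$ remains positive).

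Next I would integrate these envelopes once to obtain $\phi'(t)$. Using $\int_0^t \delta^2/(1+\delta s)^2\,ds = \delta^2 t/(1+\delta t)$ and $\int_0^t \delta^2/(1-\delta s)^2\,ds = \delta^2 t/(1-\delta t)$, I recover exactly (\ref{downg}) and (\ref{upg}) after adding $\phi'(0) = g(x)^T d$. Integrating once more, with the identities
\[
\int_0^t \frac{\delta^2 s}{1+\delta s}\,ds = \delta t - \log(1+\delta t), \qquad \int_0^t \frac{\delta^2 s}{1-\delta s}\,ds = -\delta t - \log(1-\delta t),
\]
and adding $\phi(0) + \phi'(0)t$ produces (\ref{downf}) and (\ref{upf}). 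The upper bounds again require $t < 1/\delta$ so that both the integrand and the logarithm are well-defined.

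There is no real conceptual obstacle here; the only thing to watch is the sign management in passing from $|\phi'''(t)| \leq 2\phi''(t)^{3/2}$ to the two-sided bound on $(d/dt)\phi''(t)^{-1/2}$, and making sure to state the domain of validity $t<1/\delta$ for the upper inequalities when integrating $1/(1-\delta s)^2$ and $\log(1-\delta t)$. Everything else is a routine pair of integrations of the resulting scalar envelopes.
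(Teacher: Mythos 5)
Your proof is correct and follows essentially the same route as the paper: both reduce to the scalar function $\phi''(t) = d^TG(x+td)d$, use self-concordance to bound $\bigl|\frac{d}{dt}\phi''(t)^{-1/2}\bigr| \leq 1$, obtain the two-sided envelope $\delta^2/(1+\delta t)^2 \leq \phi''(t) \leq \delta^2/(1-\delta t)^2$, and integrate once and twice to get (\ref{downg}), (\ref{upg}) and (\ref{downf}), (\ref{upf}). The only cosmetic difference is that you integrate the derivative bound directly while the paper phrases the same step via Taylor's theorem; your explicit integration identities all check out.
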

\begin{proof}
Define $\phi: \RR \rightarrow \RR$ by $\phi(t) = d^T\nabla^2f(x+td)d$. Since $f$ has continuous third derivatives, $\phi(t)$ is continuously differentiable and from the definition of self-concordancy, its derivative satisfies
\begin{equation}\label{eq:dphi}|\phi'(t)| = |\nabla^3f(x+td) \lb d,d,d \rb| \leq 2(\nabla^2 f(x+td) \lb d,d \rb)^{3/2} = 2 \phi(t)^{3/2}.\end{equation}
Moreover, since $f$ is strictly convex and $d \neq 0$, $\phi(t) > 0$ for all $t$. Therefore, from (\ref{eq:dphi}),
\begin{equation*}| \frac{d}{dt} \phi(t)^{-1/2}| = \half |\phi(t)^{-3/2} \phi'(t)| \leq 1.\end{equation*}
Defining $\psi(s) = \restr{\frac{d}{dt} \phi(t)^{-1/2}}{t = s}$, the above inequality is equivalent to $|\psi(s)| \leq 1$. By Taylor's Theorem, there exists a point $u \in (0,t)$ such that $\phi(t)^{-1/2} - \phi(0)^{-1/2} = t\psi(u)$. Since $|\psi(u)| \leq 1$, we deduce that
$$\phi(0)^{-1/2} - t \leq \phi(t)^{-1/2} \leq \phi(0)^{-1/2} + t.$$
Note that $\delta = \phi(0)^{1/2}$. Rearranging the upper bound, we find that for all $t \geq 0$,
\begin{equation}\label{downH} \phi(t) \geq \frac{\delta^2}{(1 + \delta t)^2} .\end{equation}
Similarly, we find that for $0 \leq t < \frac{1}{\delta}$,
\begin{equation}\label{upH} \phi(t) \leq \frac{\delta^2}{(1 - \delta t)^2} .\end{equation}
Integrating (\ref{downH}) yields the inequalities (\ref{downf}), (\ref{downg}), and integrating (\ref{upH}) produces (\ref{upf}), (\ref{upg}).
\end{proof}

\section{Curvature-Adaptive Step Sizes}\label{adapted}
We define a general framework for an iterative method with step sizes determined by the local curvature. At each step, we compute a descent direction $d_k = -H_kg_k$, where $H_k$ is a positive definite matrix, and a step size
$$t_k = \frac{\rho_k}{(\rho_k + \delta_k)\delta_k},$$
where
$$\delta_k = \|d_k\|_{x_k}$$ and
$$\rho_k = g_k^TH_kg_k.$$
We then advance to the point $x_{k+1} = x_k + t_kd_k$.

We will refer to the above step size $t_k$ as the \emph{curvature-adaptive} step size, or simply the \emph{adaptive} step size. A method within our framework will be referred to as an \emph{adaptive} method. A generic method in this framework is specified in \Cref{alg:generic}.

\begin{algorithm}
	\caption{Adaptive Iterative Method}
	\label{alg:generic}
	\begin{algorithmic}[1]
		\Statex \textbf{input}: $x_0, H_0$, variant
		\For{$k = 0,1,2,\ldots$}
		\State{Set $d_k \leftarrow -H_kg_k$}
		\State{Set $\rho_k \leftarrow -g_k^Td_k$}
		\State{Set $\delta_k^2 \leftarrow d_k^TG_kd_k$}
		\State{Set $t_k \leftarrow \frac{\rho_k}{(\rho_k+\delta_k)\delta_k}$}
		\State{Set $x_{k+1} \leftarrow x_k + t_kd_k$}
		\If{variant (i): gradient descent}
		\State{$H_{k+1} \leftarrow I$}
		\EndIf
		\If{variant (ii): Newton}
		\State{$H_{k+1} = G_{k+1}^{-1}$}
		\EndIf
		\If{variant (iii): BFGS}
		\State{Use standard BFGS formula (\ref{BFGS_H}) to compute $H_{k+1}$}
		\EndIf
		\If{variant (iv): L-BFGS}
		\State{Update L-BFGS curvature pairs}
		\EndIf
		\EndFor
	\end{algorithmic}
\end{algorithm}

Note that this framework encompasses several classical methods. When $H_k = I$ for all $k$, the resulting method is gradient descent. When $H_k = G_k^{-1}$, we recover the \emph{damped Newton method} proposed by Nesterov. When $H_k$ is an approximation of $G_k^{-1}$ obtained by applying a quasi-Newton updating formula, the resulting method is a quasi-Newton method. In particular, we will focus on the case where $H_k$ evolves according to the BFGS update formula. We also note that in all variants other than the damped Newton method, we do not access the full Hessian matrix $G_k$ at any step, but only the action of  $G_k$ on the direction $d_k$, which typically requires a computational effort similar to that required to compute the gradient $g_k$.

Using the results of \Cref{scf_basic}, we now show that the curvature-adaptive step size $t_k = \frac{\rho_k}{(\rho_k + \delta_k)\delta_k}$ in \Cref{alg:generic} maximizes a lower bound on the decrease in $f$ obtained by taking a step in the direction $d_k$.
\begin{LMA}\label{main_ineq}
Suppose $f$ is self-concordant and strictly convex. At iteration $k$ of \Cref{alg:generic}, taking the step $t_kd_k$, where $d_k = -H_kg_k$ and $t_k = \frac{\rho_k}{(\rho_k + \delta_k)\delta_k}$, yields the point $x_{k+1} = x_k + t_kd_k$ at which the objective function $f(x_{k+1})$ satisfies
\begin{equation}\label{eq:fdec} f(x_{k+1}) \leq f(x_k) - \omega(\eta_k)\end{equation}
where
$$\eta_k = \frac{\rho_k}{\delta_k}$$
and $\omega:\RR \rightarrow \RR$ is the function $\omega(z) = z - \log(1+z)$.

Moreover, the step size $t_k$ minimizes the upper bound (\ref{upf}) on $f(x_{k+1})$ provided by \Cref{scineq}.
\end{LMA}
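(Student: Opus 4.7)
The plan is to prove both claims in one stroke by explicitly minimizing the upper bound from \Cref{scineq} and then simplifying at the minimizer.

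First, I would invoke inequality (\ref{upf}) with $x = x_k$ and $d = d_k = -H_k g_k$. Since $g_k^T d_k = -\rho_k$, this gives, for every $t \in [0, 1/\delta_k)$,
\begin{equation*}
f(x_k + t d_k) \;\leq\; f(x_k) - t \rho_k - \delta_k t - \log(1 - \delta_k t) \;=:\; f(x_k) + q(t).
\end{equation*}
I would then observe that $q''(t) = \delta_k^2/(1-\delta_k t)^2 > 0$, so $q$ is strictly convex on $[0,1/\delta_k)$; hence any critical point is the unique global minimizer on this interval. Setting
\begin{equation*}
q'(t) = -\rho_k - \delta_k + \frac{\delta_k}{1 - \delta_k t} = 0
\end{equation*}
gives $1 - \delta_k t = \delta_k/(\rho_k + \delta_k)$, and solving for $t$ yields exactly $t_k = \rho_k/((\rho_k + \delta_k)\delta_k)$, which lies in $[0, 1/\delta_k)$ since $\rho_k/(\rho_k+\delta_k) < 1$. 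This proves the second claim.

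To obtain (\ref{eq:fdec}), I would substitute $t_k$ back into $q$. At $t = t_k$ we have $\delta_k t_k = \rho_k/(\rho_k+\delta_k)$ and $1 - \delta_k t_k = \delta_k/(\rho_k+\delta_k)$, and $t_k \rho_k = \rho_k^2/((\rho_k+\delta_k)\delta_k)$. Writing $\eta_k = \rho_k/\delta_k$, a short calculation gives
\begin{align*}
q(t_k) &= -\frac{\rho_k^2}{(\rho_k+\delta_k)\delta_k} - \frac{\rho_k}{\rho_k+\delta_k} + \log\!\left(\frac{\rho_k+\delta_k}{\delta_k}\right)\\
       &= -\frac{\eta_k^2}{\eta_k+1} - \frac{\eta_k}{\eta_k+1} + \log(1+\eta_k)
       \;=\; -\eta_k + \log(1+\eta_k) \;=\; -\omega(\eta_k),
\end{align*}
so $f(x_{k+1}) \leq f(x_k) - \omega(\eta_k)$, as required.

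I do not expect a genuine obstacle here: the argument is a clean one-variable calculus exercise once inequality (\ref{upf}) is in hand. The only points worth being careful about are (i) verifying that $t_k$ is admissible, i.e. $t_k < 1/\delta_k$, which is immediate, and (ii) the algebraic simplification at $t_k$, where the factor $\eta_k(\eta_k+1)/(\eta_k+1) = \eta_k$ cancellation makes the expression collapse neatly into $\omega(\eta_k)$. Strict convexity of $f$ ensures $\delta_k > 0$ whenever $d_k \neq 0$, so the formulas are well defined.
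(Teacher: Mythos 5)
Your proof is correct and follows essentially the same route as the paper: apply inequality (\ref{upf}) with $g_k^Td_k=-\rho_k$, show by one-variable calculus that the resulting bound is optimized exactly at $t_k=\rho_k/((\rho_k+\delta_k)\delta_k)$, and simplify the bound at that point to $-\omega(\eta_k)$. The only cosmetic difference is that you derive $t_k$ as the unique critical point of a strictly convex function, whereas the paper takes $t_k$ as given and verifies the first- and second-order optimality conditions there; the algebra is identical.
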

\begin{proof}
We fix the index $k$ and omit the subscripts for brevity. First, observe that
$$0 \leq t = \frac{\rho}{(\rho + \delta)\delta} < \frac{1}{\delta}.$$
Therefore, we can apply inequality (\ref{upf}) to $f(x+td)$. Noting that $\rho = -g^Td$, (\ref{upf}) can be written as $f(x+td) \leq f(x) - \Delta(t)$
where $\Delta(\cdot)$ is defined to be the function $\Delta(\tau) = (\rho + \delta)\tau + \log(1 - \delta \tau)$. For the curvature-adaptive step size $t$, it is easily verified that
$$\Delta(t) = \Delta \left( \frac{\rho}{(\rho+\delta)\delta} \right) = \frac{\rho}{\delta} + \log\left(\frac{\delta}{\rho+\delta}\right) = \frac{\rho}{\delta} - \log\left(1 + \frac{\rho}{\delta}\right) =  \omega(\eta).$$

Furthermore, for $0 \leq \tau < \frac{1}{\delta}$, $\frac{d}{d\tau} \Delta(\tau) = \rho + \delta - \frac{\delta}{1 - \delta \tau}$ and $\frac{d^2}{d\tau^2} \Delta(\tau) = -\frac{\delta^2}{(1 - \delta \tau)^2}$. We find that $\frac{d}{d\tau} \Delta(t) = 0$ and $\frac{d^2}{d\tau^2}\Delta(t) \leq 0$, which implies that $\Delta$ is maximized at $\tau = t = \frac{\rho}{(\rho + \delta)\delta}$.
\end{proof}

Since $\omega(\eta) = \eta - \log(1 + \eta)$ is positive for all $\eta > 0$, it follows that if $\limsup_k \eta_k > 0$, then $f(x_k) \rightarrow  -\infty$. This simple fact will be crucial in our convergence analysis. 

\begin{LMA}\label{etazero}
If, in addition to the assumptions in \Cref{main_ineq}, $f$ is bounded below, then $\eta_k = \frac{\rho_k}{\delta_k} \rightarrow 0$ for any of the adaptive variants in \Cref{alg:generic}.
\end{LMA}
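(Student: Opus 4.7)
The plan is to combine \Cref{main_ineq} with the boundedness of $f$ from below via a standard telescoping argument, then conclude via the properties of the function $\omega$.

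First I would invoke \Cref{main_ineq} at each iteration, which gives
\[
f(x_{k+1}) \leq f(x_k) - \omega(\eta_k).
\]
Summing this from $k = 0$ to $N$ and rearranging yields
\[
\sum_{k=0}^{N} \omega(\eta_k) \leq f(x_0) - f(x_{N+1}).
\]
Letting $f^\ast = \inf_x f(x) > -\infty$, which exists by the assumption that $f$ is bounded below, the right-hand side is bounded above by $f(x_0) - f^\ast$ uniformly in $N$. Since $\omega(\eta_k) \geq 0$ (note that $\eta_k \geq 0$ because $\rho_k = g_k^T H_k g_k \geq 0$ for any positive semidefinite $H_k$, and $\delta_k \geq 0$), taking $N \to \infty$ gives a convergent series
\[
\sum_{k=0}^{\infty} \omega(\eta_k) \leq f(x_0) - f^\ast < \infty,
\]
and hence $\omega(\eta_k) \to 0$.

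Finally, I would conclude $\eta_k \to 0$ from $\omega(\eta_k) \to 0$ using the properties of $\omega$. The function $\omega(z) = z - \log(1+z)$ satisfies $\omega(0) = 0$, is continuous on $[0,\infty)$, strictly increasing (since $\omega'(z) = z/(1+z) > 0$ for $z > 0$), and tends to $\infty$ as $z \to \infty$. Thus $\omega$ is a bijection from $[0,\infty)$ onto $[0,\infty)$ with continuous inverse, so $\omega(\eta_k) \to 0$ forces $\eta_k \to 0$.

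The only subtlety is ensuring $\eta_k \geq 0$ so that $\omega(\eta_k) \geq 0$ makes the telescoping argument meaningful; this reduces to verifying that each variant of \Cref{alg:generic} maintains $H_k \succeq 0$ (trivial for gradient descent where $H_k = I$, for Newton where $H_k = G_k^{-1} \succ 0$ by strict convexity, and for BFGS/L-BFGS provided the update preserves positive definiteness, which the authors will address later). This is the one place where the argument depends on the variant, but it is routine for all cases considered.
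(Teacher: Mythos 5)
Your proof is correct and follows essentially the same route as the paper: telescope the decrease bound from \Cref{main_ineq} against the lower bound on $f$, then use positivity and monotonicity of $\omega$ to pass from $\omega(\eta_k)\to 0$ to $\eta_k\to 0$. The paper merely phrases the last step contrapositively (assuming $\limsup_k \eta_k>0$ and deriving $f(x_k)\to-\infty$), whereas you argue directly via continuity of $\omega^{-1}$; the substance is identical.
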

\begin{proof}
By \Cref{main_ineq}, $f(x_k)$ satisfies $f(x_k) \leq f(x_0) - \sum_{j=0}^{k-1} \omega(\eta_j)$. Suppose that $\limsup_k \eta_k > 0$. Since the function $\omega(\eta)$ is positive and monotonically increasing for $\eta > 0$, we have $\limsup_k \omega(\eta_k) = \omega(\limsup_k \eta_k) > 0$. Hence $f(x_k) \rightarrow -\infty$, a contradiction.
\end{proof}

In terms of $g_k, H_k$, and $G_k$, the adaptive step size $t_k$ can be expressed as
$$t_k = \frac{g_k^TH_kg_k}{g_k^TH_kG_kH_kg_k + g_k^TH_kg_k\sqrt{g_k^TH_kG_kH_kg_k}}.$$
This formula relates $t_k$ to the local curvature. When the curvature of $f$ in the direction $d_k = -H_kg_k$ is relatively flat, the local norm $\|d_k\|_{x_k} = \sqrt{g_k^TH_kG_kH_kg_k}$ is small, and the adaptive step size $t_k$ will be large. Conversely, when the curvature of $f$ in the direction $d_k$ is steep, $t_k$ will be small. Intuitively, this is precisely the desired behavior for a step size, since we wish to take larger steps when the function changes slowly, and smaller steps when the function changes rapidly.

\section{Scaled Gradient Methods}\label{boundedH}
We first consider the class of methods where the matrices $H_k$ are positive definite and uniformly bounded above and below. That is, there exist positive constants $\lambda, \Lambda$ such that for every $k \geq 0$,
\begin{equation}\label{hbound} \lambda I \preceq H_k \preceq \Lambda I. \end{equation}
The convergence analysis is rather straightforward, as seen in the proofs of the following two theorems for these methods.

\begin{THM}\label{Gzero}
If $f$ is self-concordant, strictly convex, bounded below, and the Hessian satisfies $G(x) \preceq MI$ on the level set $\Omega = \{x: f(x) \leq f(x_0)\}$, then any adaptive method (\Cref{alg:generic}) for which the matrices $H_k$ satisfy \cref{hbound} converges globally in the sense that $\lim\limits_{k \rightarrow \infty} \|g_k\| = 0$.
\end{THM}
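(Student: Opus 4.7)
The plan is to combine the bound $\eta_k \to 0$ from \Cref{etazero} with the uniform spectral bounds on $H_k$ and $G_k$ to extract a quantitative inequality of the form $\eta_k \geq c \|g_k\|$, from which the conclusion is immediate.

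First I would verify that the Hessian bound $G(x_k) \preceq MI$ actually applies at every iterate. Since $f(x_{k+1}) \leq f(x_k) - \omega(\eta_k) \leq f(x_k)$ by \Cref{main_ineq} (recall $\omega(\eta) \geq 0$ for $\eta \geq 0$), a simple induction shows $x_k \in \Omega$ for all $k \geq 0$, so $G_k \preceq MI$ throughout the run of the algorithm. Also, $f$ is bounded below, so the hypotheses of \Cref{etazero} are in force and we may use $\eta_k \to 0$.

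Next I would bound $\rho_k$ and $\delta_k$ in terms of $\|g_k\|$. From $H_k \succeq \lambda I$ we have $\rho_k = g_k^T H_k g_k \geq \lambda \|g_k\|^2$. For $\delta_k$, setting $v_k = H_k g_k$ and using $G_k \preceq MI$ together with $H_k \preceq \Lambda I$, we obtain
\begin{equation*}
\delta_k^2 = v_k^T G_k v_k \leq M\|v_k\|^2 = M g_k^T H_k^2 g_k \leq M\Lambda^2 \|g_k\|^2,
\end{equation*}
so $\delta_k \leq \sqrt{M}\,\Lambda\,\|g_k\|$. Combining these gives
\begin{equation*}
\eta_k = \frac{\rho_k}{\delta_k} \geq \frac{\lambda}{\sqrt{M}\,\Lambda}\,\|g_k\|.
\end{equation*}
Since the right-hand side is a fixed positive multiple of $\|g_k\|$, and $\eta_k \to 0$ by \Cref{etazero}, we conclude $\|g_k\| \to 0$.

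There isn't really a hard obstacle here; the only subtle point is ensuring the level-set hypothesis transfers to every iterate, which is handled by the monotonic decrease of $f$ along the adaptive iterates. The rest is essentially a two-line computation of spectral bounds combined with the already-established fact that $\eta_k \to 0$.
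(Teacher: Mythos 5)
Your proof is correct and follows essentially the same route as the paper: establish $\eta_k \geq c\|g_k\|$ for a fixed $c>0$ from the spectral bounds $\lambda I \preceq H_k \preceq \Lambda I$ and $G_k \preceq MI$, then invoke \Cref{etazero}. The only difference is cosmetic --- the paper substitutes $z_k = H_k^{1/2}g_k$ to get the slightly sharper constant $\sqrt{\lambda/(\Lambda M)}$ where you bound $\rho_k$ and $\delta_k$ separately to get $\lambda/(\Lambda\sqrt{M})$, and your explicit check that the iterates remain in $\Omega$ is a point the paper leaves implicit.
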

\begin{proof}
\textcolor{black}{Since $H_k$ is positive definite, $H_k^{1/2}$ exists and we may define $z_k = H_k^{1/2}g_k$}. Observe that
\begin{equation}\label{etag}
\eta_k = \frac{ g_k^TH_kg_k}{\sqrt{ g_k^TH_kG_kH_kg_k}} = \frac{z_k^Tz_k}{\sqrt{z_k^T(H_k^{1/2}G_kH_k^{1/2})z_k}} \geq \frac{\|z_k\|}{\sqrt{\Lambda M}} \geq \sqrt{\frac{\lambda}{\Lambda M}} \|g_k\|
\end{equation}
\textcolor{black}{where we have used the fact that the maximum eigenvalue of $H_k^{1/2}G_kH_k^{1/2}$ is bounded by $\Lambda M$.} By \Cref{etazero}, $\eta_k \rightarrow 0$. Therefore $\|g_k\| \rightarrow 0$.
\end{proof}

If in addition, $f$ is strongly convex with $mI \preceq G(x)$ for $m > 0$, then an adaptive method satisfying \cref{hbound} is globally $R$-linearly convergent. The proof uses the fact that strongly convex functions satisfy the \emph{Polyak-\L{}ojasiewicz inequality}, which is stated in the following well known lemma (e.g., see \cite{POW, BLOCKBFGS}).
\begin{LMA}\label{gTof}
If $f$ is strongly convex with $mI \preceq G(x)$, and $x_\ast$ is the unique minimizer of $f$, then $\|g(x)\|^2 \geq 2m(f(x) - f(x_\ast))$.
\end{LMA}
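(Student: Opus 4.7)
The plan is to deduce this classical inequality by combining the standard quadratic lower bound from strong convexity with a one-step minimization. First I would use the hypothesis $mI \preceq G(x)$ together with a second-order Taylor expansion along the segment from $x$ to $y$ to obtain the global quadratic lower bound
$$f(y) \geq f(x) + g(x)^T(y - x) + \frac{m}{2}\|y - x\|^2 \quad \text{for all } x, y \in \RR^n.$$
Because $G(\cdot) \succeq mI$ everywhere (not just on a level set), this bound is valid globally.

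Next I would fix $x$ and minimize both sides of the inequality over $y$. The left-hand side attains its infimum $f(x_\ast)$ at the unique minimizer $x_\ast$. The right-hand side is a strictly convex quadratic in $y$ whose unconstrained minimizer is $y^\star = x - \frac{1}{m} g(x)$, with minimum value $f(x) - \frac{1}{2m}\|g(x)\|^2$. The resulting inequality $f(x_\ast) \geq f(x) - \frac{1}{2m}\|g(x)\|^2$ rearranges to the desired bound $\|g(x)\|^2 \geq 2m(f(x) - f(x_\ast))$.

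There is no substantive obstacle in this argument: it reduces to the textbook quadratic lower bound for strongly convex functions and an elementary minimization of a one-variable quadratic. The only point requiring any care is to ensure that the strong convexity bound is applied globally on $\RR^n$ rather than on a sublevel set, which is immediate from the stated hypothesis. I note that the same conclusion could be reached by instantiating the quadratic bound at $y = x_\ast$ from the other side (using $f(x) \geq f(x_\ast) + g(x_\ast)^T(x - x_\ast) + \frac{m}{2}\|x - x_\ast\|^2$ and $g(x_\ast) = 0$) combined with $\|g(x)\| \geq m\|x - x_\ast\|$, but the minimization argument above is cleaner and avoids bounding $\|x - x_\ast\|$ separately.
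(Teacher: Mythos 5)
Your main argument is correct and is the standard derivation of this Polyak--\L{}ojasiewicz inequality (quadratic lower bound from $G \succeq mI$, then minimize both sides over $y$); the paper itself gives no proof, merely citing the result as well known, so there is nothing to diverge from. One small caution about your closing side remark: the alternative route via $f(x) \geq f(x_\ast) + \frac{m}{2}\|x - x_\ast\|^2$ together with $\|g(x)\| \geq m\|x - x_\ast\|$ does not actually work, since it bounds both $\|g(x)\|^2$ and $2m(f(x) - f(x_\ast))$ from \emph{below} by $m^2\|x - x_\ast\|^2$, which permits no comparison between them, and the convexity-based repair $f(x) - f(x_\ast) \leq g(x)^T(x - x_\ast) \leq \frac{1}{m}\|g(x)\|^2$ only recovers the weaker constant $m$ in place of $2m$.
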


We are now ready to prove the $R$-linear convergence of adaptive scaled gradient methods.

\begin{THM}\label{RLinear}
If $f$ is self-concordant and strongly convex (so there exist constants $0 < m \leq M$ such that $mI \preceq G(x) \preceq MI$ for all $x \in \Omega$), then an adaptive method (\Cref{alg:generic}) for which the matrices $H_k$ satisfy \cref{hbound} is globally $R$-linearly convergent. That is, there exists a positive constant $\gamma < 1$ such that $f(x_{k+1}) - f(x_\ast) \leq \gamma (f(x_k) - f(x_\ast))$ for all $k$.
\end{THM}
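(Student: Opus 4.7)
The plan is to apply \Cref{main_ineq}, which gives $f(x_{k+1}) \leq f(x_k) - \omega(\eta_k)$, and then lower-bound $\omega(\eta_k)$ by a positive multiple of the suboptimality $f(x_k) - f(x_\ast)$. If $\omega(\eta_k) \geq c\bigl(f(x_k) - f(x_\ast)\bigr)$ can be shown for some universal constant $c > 0$, the theorem follows immediately with $\gamma = 1 - c$.

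For the lower bound on $\eta_k^2$, I combine the estimate $\eta_k \geq \sqrt{\lambda/(\Lambda M)}\,\|g_k\|$ from \eqref{etag} in the proof of \Cref{Gzero} with the Polyak--{\L}ojasiewicz inequality $\|g_k\|^2 \geq 2m\bigl(f(x_k) - f(x_\ast)\bigr)$ from \Cref{gTof} to obtain $\eta_k^2 \geq \frac{2m\lambda}{\Lambda M}\bigl(f(x_k) - f(x_\ast)\bigr)$.

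The main obstacle is to convert this into a lower bound on $\omega(\eta_k)$, because $\omega(z) = z - \log(1+z)$ behaves like $\tfrac{1}{2}z^2$ near $z=0$ but only like $\log z$ for large $z$, so a quadratic lower bound on $\omega$ is only valid on a bounded interval. To show that $\eta_k$ stays in such an interval, I observe that by \Cref{main_ineq} the sequence $\{f(x_k)\}$ is non-increasing, so $\omega(\eta_k) \leq f(x_k) - f(x_{k+1}) \leq f(x_0) - f(x_\ast)$; since $\omega$ is strictly increasing on $[0,\infty)$, this yields the uniform bound $\eta_k \leq C := \omega^{-1}\bigl(f(x_0) - f(x_\ast)\bigr)$. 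Applying the elementary inequality $\omega(z) \geq z^2/(2(1+z))$ for $z \geq 0$ (verified by noting that both sides vanish at $z=0$ and that $\tfrac{d}{dz}\bigl[\omega(z) - z^2/(2(1+z))\bigr] = z^2/(2(1+z)^2) \geq 0$) then gives $\omega(\eta_k) \geq \eta_k^2/(2(1+C))$. Chaining this with the lower bound on $\eta_k^2$ produces $\omega(\eta_k) \geq \frac{m\lambda}{(1+C)\Lambda M}\bigl(f(x_k) - f(x_\ast)\bigr)$, establishing the theorem with $\gamma = 1 - \frac{m\lambda}{(1+C)\Lambda M} \in (0,1)$.
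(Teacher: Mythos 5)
Your proof is correct and follows essentially the same route as the paper's: combine \Cref{main_ineq} with the bound \eqref{etag}, the Polyak--\L{}ojasiewicz inequality of \Cref{gTof}, and a quadratic lower bound $\omega(z) \geq \frac{z^2}{2(1+\cdot)}$ valid on a bounded interval of $\eta_k$ values. The only (cosmetic) difference is the source of the uniform bound on $\eta_k$: the paper takes $\Gamma = \sup_k \eta_k$, finite by \Cref{etazero}, whereas you derive the explicit bound $\eta_k \leq \omega^{-1}\bigl(f(x_0)-f(x_\ast)\bigr)$ from the monotone decrease of $f$, which yields a computable contraction factor.
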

\begin{proof}
Since $\eta_k \rightarrow 0$ by \Cref{etazero}, the sequence $\{\eta_k\}_{k=0}^\infty$ is bounded. Let $\Gamma = \sup_k \eta_k < \infty$, and let $c = \frac{1}{2(1+\Gamma)}$. Observe that $\omega(z) = z - \log(1+z) \geq cz^2$ for $0 \leq z \leq \Gamma$, as $\omega(0) = 0$ and $\frac{d}{dz} (\omega(z) - cz^2) = \frac{z(1 - 2c - 2cz)}{1 + z}$, which is non-negative for $0 \leq z \leq \Gamma$. Hence, since $\eta_k \leq \Gamma$ for all $k$, we have
\begin{align*}
f(x_{k+1}) - f(x_\ast) \leq f(x_k) - f(x_\ast) - \omega(\eta_k) &\leq f(x_k) - f(x_\ast) - c\eta_k^2 \\
&\leq f(x_k) - f(x_\ast) -  \frac{c\lambda}{\Lambda M} \|g(x_k)\|^2 \\
&\leq \left( 1 - \frac{\lambda m}{\Lambda(1+\Gamma)M}\right) (f(x_k) - f(x_\ast))
\end{align*}
where the first line follows from inequality (\ref{upf}), the second from inequality (\ref{etag}), and the third from \Cref{gTof}. Taking $\gamma = 1 - \frac{\lambda m}{\Lambda(1+\Gamma)M}$, we obtain the desired $R$-linear convergence.
\end{proof}

\subsection{Adaptive Gradient Descent}\label{gd}
When $H_k = I$ for all $k$ in \Cref{alg:generic}, the method corresponds to gradient descent with adaptive step sizes that incorporate second-order information. This strategy for selecting analytically computable step sizes may have several advantages in practice. Using second-order information allows a better local model of the objective function. The classical analysis of gradient descent with a fixed step size also generally requires a sufficiently small step size in order to guarantee convergence. This step size is a function of the Lipschitz constant for the gradient $g(x)$, which is either unknown or impractical to compute. The step size needed to ensure convergence in theory is also often impractically tiny, leading to slow convergence in practice. For the class of self-concordant functions, an adaptive step size can be easily computed without knowledge of any constants, and still provides a theoretical guarantee of convergence, which is a significant advantage.

A proximal gradient descent method with adaptive step sizes was studied by Tran-Dinh et al. \cite{TRANDINH}, who proved the method to be globally convergent for self-concordant functions, and locally $R$-linearly convergent for strongly convex self-concordant functions. However, our convergence analysis above employs different techniques from those in \cite{TRANDINH}, and in particular, we obtain the following theorem, which shows that the adaptive gradient descent method is globally $R$-linearly convergent, as an immediate corollary of \Cref{Gzero} and \Cref{RLinear}:
\begin{THM}
Suppose that $f$ is self-concordant, strictly convex, bounded below, and $G(x) \preceq MI$ on the level set $\Omega = \{x \in \RR^n: f(x) \leq f(x_0)\}$. Then the adaptive gradient descent method converges in the sense that $\lim_{k \rightarrow \infty} \|g_k\| = 0$. Furthermore, if $f$ is strongly convex on $\Omega$, then the adaptive gradient descent method is globally $R$-linearly convergent.
\end{THM}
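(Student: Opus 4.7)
The plan is to recognize this theorem as a direct specialization of the two preceding general theorems for scaled gradient methods, \Cref{Gzero} and \Cref{RLinear}, to the case $H_k = I$. Since adaptive gradient descent is defined by variant (i) of \Cref{alg:generic}, which sets $H_{k+1} = I$ at every iteration (and presumably $H_0 = I$), the sequence $\{H_k\}$ is the constant sequence $I$. In particular, the uniform spectral bound \cref{hbound} holds trivially with $\lambda = \Lambda = 1$, so both hypotheses that single out scaled gradient methods in \Cref{Gzero,RLinear} are automatically satisfied.

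Given this, I would organize the proof as two one-line invocations. First, under the hypotheses of the theorem (self-concordance, strict convexity, boundedness below, and $G(x) \preceq MI$ on $\Omega$), \Cref{Gzero} applies with $\lambda = \Lambda = 1$ to yield $\lim_{k \to \infty} \|g_k\| = 0$. Second, if in addition $f$ is strongly convex on $\Omega$ with $mI \preceq G(x)$, then \Cref{RLinear} applies with the same $\lambda = \Lambda = 1$, producing a constant $\gamma = 1 - \tfrac{m}{(1+\Gamma)M} < 1$ (where $\Gamma = \sup_k \eta_k$ is finite by \Cref{etazero}) such that $f(x_{k+1}) - f(x_\ast) \leq \gamma (f(x_k) - f(x_\ast))$, which is precisely the stated global $R$-linear convergence.

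Since the work is essentially a substitution, there is no real obstacle to overcome; the only thing worth flagging is that the theorem is genuinely a corollary rather than an independent result, and the brief proof should make this explicit rather than reprove anything. I would not redo the derivation of the key inequality $\eta_k \geq \sqrt{\lambda/(\Lambda M)}\|g_k\|$ from \cref{etag}, nor the quadratic lower bound $\omega(z) \geq cz^2$ on $[0,\Gamma]$, nor the appeal to the Polyak--\L{}ojasiewicz inequality in \Cref{gTof} — all of these are already carried out in full generality in the proofs of \Cref{Gzero,RLinear}, and they specialize to the $H_k = I$ case without any modification. The proof I would write is therefore only two or three sentences long, stating the specialization and citing the two theorems.
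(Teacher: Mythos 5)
Your proposal is correct and matches the paper exactly: the paper presents this theorem as an immediate corollary of \Cref{Gzero} and \Cref{RLinear}, obtained by specializing to $H_k = I$ so that \cref{hbound} holds with $\lambda = \Lambda = 1$. No further review is needed.
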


\subsection{Adaptive L-BFGS}
The limited-memory BFGS algorithm (L-BFGS, \cite{LBFGS}) stores a fixed number of previous \emph{curvature pairs} $(s_k, y_k)$, where $s_k = x_{k+1} - x_k$ \textcolor{black}{and} $y_k = g_{k+1} - g_k$, and computes $d_k = -H_kg_k$ from the curvature pairs using a two-loop recursion \cite{TWOLOOP}. It is well known that L-BFGS satisfies \cref{hbound}. In \cite{GGR}, the following bounds are obtained.

\begin{THM}[Lemma 1, \cite{GGR}]
Suppose that $f$ is strongly convex, with $mI \leq G(x) \leq MI$. Let $\ell$ be the number of curvature pairs stored by the L-BFGS method. Then the matrices $H_k$ satisfy
$$\lambda I \preceq H_k \preceq \Lambda I,$$
where $\lambda = (1 + \ell M)^{-1}$ and $\Lambda = (1 + \sqrt{\kappa})^{2\ell}\left( 1 + \frac{1}{m(2\sqrt{\kappa} + \kappa)} \right)$ for $\kappa = M/m$.
\end{THM}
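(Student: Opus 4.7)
The plan is to treat L-BFGS as an $\ell$-fold composition of BFGS updates applied to an initial scaling matrix. At iteration $k$, $H_k$ is obtained by starting from an initial matrix $H_k^{(0)} = \gamma_k I$ (with $\gamma_k$ chosen as a safeguarded version of $s_{k-1}^T y_{k-1}/\|y_{k-1}\|^2$, so that $\gamma_k$ lies in a known interval determined by $m$ and $M$) and applying $\ell$ BFGS updates of the form
$$H^{(i+1)} = V_i^T H^{(i)} V_i + \rho_i s_i s_i^T, \qquad V_i = I - \rho_i y_i s_i^T, \qquad \rho_i = 1/(y_i^T s_i),$$
using the most recent $\ell$ curvature pairs. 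Strong convexity with $mI \preceq G(x) \preceq MI$ and the integral identity $y_i = \bigl(\int_0^1 G(x_i + \tau s_i)\,d\tau\bigr) s_i$ yield all the elementary bounds I will need: $m\|s_i\|^2 \leq y_i^T s_i \leq M\|s_i\|^2$, $\|y_i\|^2/(y_i^T s_i) \leq M$, and $\|s_i\|^2/(y_i^T s_i) \leq 1/m$.

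For the lower bound $H_k \succeq \lambda I$, I prefer to work with the inverses $B^{(i)} = (H^{(i)})^{-1}$, which satisfy the dual BFGS recursion
$$B^{(i+1)} = B^{(i)} - \frac{B^{(i)} s_i s_i^T B^{(i)}}{s_i^T B^{(i)} s_i} + \frac{y_i y_i^T}{y_i^T s_i}.$$
The middle term is negative semidefinite, so it does not increase the operator norm, while the rank-one addition contributes at most $\|y_i\|^2/(y_i^T s_i) \leq M$. Iterating and using that the safeguarded initial matrix satisfies $\|B_k^{(0)}\| \leq 1$ then gives $\|B_k\| \leq 1 + \ell M$, which inverts to the claimed $\lambda = (1 + \ell M)^{-1}$.

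For the upper bound $H_k \preceq \Lambda I$, I work directly with the $H$-recursion. Taking operator norms of the BFGS update gives $\|H^{(i+1)}\| \leq \|V_i\|^2 \|H^{(i)}\| + \rho_i \|s_i\|^2$. A short computation, using that
$$\frac{\|y_i\|\,\|s_i\|}{y_i^T s_i} \leq \sqrt{\frac{\|y_i\|^2}{y_i^T s_i}}\cdot\sqrt{\frac{\|s_i\|^2}{y_i^T s_i}} \leq \sqrt{\kappa},$$
gives $\|V_i\| \leq 1 + \sqrt{\kappa}$ and $\rho_i \|s_i\|^2 \leq 1/m$. This yields the linear recursion
$$\|H^{(i+1)}\| \leq (1+\sqrt{\kappa})^2\, \|H^{(i)}\| + \tfrac{1}{m}.$$
Summing the resulting geometric series over $\ell$ steps starting from $\|H_k^{(0)}\| \leq 1/m$, and recognizing the algebraic identity $(1+\sqrt{\kappa})^2 - 1 = 2\sqrt{\kappa} + \kappa$ in the denominator, produces exactly $\Lambda = (1+\sqrt{\kappa})^{2\ell}\bigl(1 + \tfrac{1}{m(2\sqrt{\kappa}+\kappa)}\bigr)$.

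The main obstacle is the upper bound. The first thing one is tempted to try is a trace/determinant argument on the eigenvalues of $B^{(i)}$, converting a lower eigenvalue bound on $B_k$ into an upper bound on $H_k$; but this approach introduces the ambient dimension $n$ and fails to reproduce the dimension-free expression claimed. The essential insight, which is what the proof in \cite{GGR} exploits, is to avoid eigenvalue machinery entirely and track the operator norm of $H^{(i)}$ through the linear recursion above, where the base $(1+\sqrt{\kappa})^2$ is dictated by the spectral norm of $V_i$ and the exponent $2\ell$ by the two factors of $V_i$ per update.
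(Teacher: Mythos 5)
This theorem is imported from \cite{GGR} and the paper gives no proof of it, so there is nothing internal to compare against; I can only assess your argument on its own terms. Your two-sided strategy is the standard (and correct) one: for the lower bound, pass to the inverse recursion $B^{(i+1)} = B^{(i)} - \frac{B^{(i)}s_is_i^TB^{(i)}}{s_i^TB^{(i)}s_i} + \frac{y_iy_i^T}{y_i^Ts_i}$, drop the negative semidefinite middle term, and use $\|y_i\|^2/(y_i^Ts_i) \leq M$ (valid since $y_i = \ov{G}_is_i$ with $mI \preceq \ov{G}_i \preceq MI$); for the upper bound, take operator norms in $H^{(i+1)} = V_i^TH^{(i)}V_i + \rho_is_is_i^T$ with $\|V_i\| \leq 1 + \|y_i\|\|s_i\|/(y_i^Ts_i) \leq 1+\sqrt{\kappa}$ and $\rho_i\|s_i\|^2 \leq 1/m$, then sum the geometric series with ratio $(1+\sqrt{\kappa})^2$. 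All of these estimates are correct, and the identity $(1+\sqrt{\kappa})^2 - 1 = 2\sqrt{\kappa}+\kappa$ does produce the denominator in $\Lambda$.

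The one genuine flaw is your treatment of the initial matrix $H_k^{(0)}$, where you use two mutually inconsistent bounds. To recover the stated constants exactly you need $H_k^{(0)} = I$: then $\|B_k^{(0)}\| = 1$ gives $\|B_k\| \leq 1 + \ell M$, i.e.\ $\lambda = (1+\ell M)^{-1}$, and $\|H_k^{(0)}\| = 1$ gives $\|H_k\| \leq (1+\sqrt{\kappa})^{2\ell}\bigl(1 + \frac{1}{m(2\sqrt{\kappa}+\kappa)}\bigr)$. You instead posit a scaled initialization $\gamma_k I$ with $\gamma_k \approx s^Ty/\|y\|^2 \in [1/M, 1/m]$, and then assert both $\|B_k^{(0)}\| \leq 1$ and $\|H_k^{(0)}\| \leq 1/m$; the first is false for that initialization (it only gives $\|B_k^{(0)}\| \leq M$, which would yield $\lambda = ((\ell+1)M)^{-1}$), and the second, if carried through your geometric-series sum, yields $(1+\sqrt{\kappa})^{2\ell}\bigl(\frac{1}{m} + \frac{1}{m(2\sqrt{\kappa}+\kappa)}\bigr)$ rather than the claimed $\Lambda$, so your final "produces exactly $\Lambda$" does not follow as written. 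Fix this by fixing the initialization to $H_k^{(0)} = I$ (or by stating the corresponding modified constants for the scaled initialization); with that repair the proof is complete.
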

Hence, it follows immediately from \Cref{Gzero} and \Cref{RLinear} that:
\begin{THM}
Suppose that $f$ is self-concordant, strongly convex, and $mI \preceq G(x) \preceq MI$ on the level set $\Omega = \{x \in \RR^n: f(x) \leq f(x_0)\}$. Then the adaptive L-BFGS method is globally $R$-linearly convergent.
\end{THM}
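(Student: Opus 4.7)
The plan is to observe that the theorem is essentially a direct corollary of \Cref{RLinear} combined with the quoted Lemma 1 of \cite{GGR}. What \Cref{RLinear} requires is (a) $f$ self-concordant, (b) $mI \preceq G(x) \preceq MI$ on the region actually visited by the iterates, and (c) uniform spectral bounds of the form $\lambda I \preceq H_k \preceq \Lambda I$ on the quasi-Newton matrices. Assumption (a) is given by hypothesis, (b) is given on the level set $\Omega$, and the quoted Lemma 1 of \cite{GGR} provides precisely the bounds needed for (c) with the explicit constants $\lambda = (1 + \ell M)^{-1}$ and $\Lambda = (1 + \sqrt{\kappa})^{2\ell}(1 + \frac{1}{m(2\sqrt{\kappa}+\kappa)})$.

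The one detail I would verify is that the Hessian bounds from the hypothesis, which hold only on $\Omega$, can legitimately be used in the L-BFGS bound of \cite{GGR}, which was stated under global Hessian bounds. This is handled by noting that at every step of \Cref{alg:generic} we have by \Cref{main_ineq}
\begin{equation*}
f(x_{k+1}) \leq f(x_k) - \omega(\eta_k) \leq f(x_k),
\end{equation*}
so $f(x_k) \leq f(x_0)$ for all $k$, i.e., the whole iterate sequence lies in $\Omega$. Consequently, every curvature pair $(s_k, y_k)$ used to build $H_k$ is generated from gradients of $f$ evaluated on $\Omega$, where the Hessian bounds $mI \preceq G(x) \preceq MI$ hold. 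The proof of Lemma 1 of \cite{GGR} only invokes these bounds along the iterate trajectory (through the secant pairs), so the conclusion $\lambda I \preceq H_k \preceq \Lambda I$ carries over unchanged.

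With (a)–(c) verified, I would simply cite \Cref{RLinear} to obtain a constant $\gamma \in (0,1)$ such that $f(x_{k+1}) - f(x_\ast) \leq \gamma (f(x_k) - f(x_\ast))$ for all $k$, which is global $R$-linear convergence. There is no real obstacle here; the only thing worth spelling out in the write-up is the descent property ensuring iterates stay in $\Omega$, so that the \cite{GGR} bound is applicable in the present (local-bound) setting.
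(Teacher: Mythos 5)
Your proposal matches the paper's argument: the theorem is stated there as an immediate consequence of \Cref{RLinear} together with the quoted bounds $\lambda I \preceq H_k \preceq \Lambda I$ from Lemma 1 of \cite{GGR}. Your additional check that the monotone decrease $f(x_{k+1}) \leq f(x_k)$ keeps all iterates (and hence all curvature pairs) in $\Omega$, so the level-set Hessian bounds suffice, is a correct and worthwhile detail that the paper leaves implicit.
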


We note that, as with gradient descent, it is well known that the L-BFGS method converges if inexact Armijo-Wolfe line searches are performed, or a sufficiently small fixed step size, that depends on the Lipschitz constant of $g(x)$, is used.

\section{Adaptive BFGS}\label{BFGS}
If $H_k$ is chosen to approximate $(\nabla^2 f(x_k))^{-1}$, then we obtain quasi-Newton methods with adaptive step sizes. In particular, we may iteratively update $H_k$ using the BFGS update formula, which we briefly describe. Let $s_k = x_{k+1} - x_k$ and $y_k = g_{k+1} - g_k$. The BFGS update sets $H_{k+1}$ to be the nearest matrix to $H_k$ (in a variable metric) satisfying the \emph{secant equation} $H_{k+1}y_k = s_k$\cite{G1970}. It is well known that $H_{k+1}$ has the following expression in terms of $H_k, s_k$ and $y_k$:
\begin{equation}\label{BFGS_H} H_{k+1} = \frac{s_ks_k^T}{y_k^Ts_k} + \left(I - \frac{s_ky_k^T}{y_k^Ts_k}\right)H_k\left(I - \frac{y_ks_k^T}{y_k^Ts_k}\right).\end{equation}

\subsection{Superlinear Convergence of Adaptive BFGS}

The convergence analysis of the classical BFGS method \cite{POW, BNY} assumes that the method uses inexact line searches satisfying the \emph{Armijo-Wolfe} conditions: for constants $c_1, c_2 \in (0,1)$ with $c_1 < \half$ and $c_1 < c_2$, the step size $t_k$ should satisfy
\begin{equation}\label{armijo} f(x_k + t_kd_k) \leq f(x_k) + c_1 t_k g_k^Td_k \hspace{2em} \text{(Armijo condition)}\end{equation}
and
\begin{equation}\label{wolfe} g(x_k + t_kd_k)^Td_k \geq c_2 g_k^Td_k. \hspace{2em} \text{(Wolfe condition)} \end{equation}

Under the assumption of Armijo-Wolfe line searches, Powell \cite{POW} proves the following global convergence theorem for BFGS.
\begin{THM}[Lemma 1, \cite{POW}]\label{POW_C}
If the BFGS algorithm with Armijo-Wolfe inexact line search is applied to a convex function $f(x)$ that is bounded below, if $x_0$ is any starting vector and $H_0$ is any positive definite matrix, and if the Hessian $G(x)$ satisfies $G(x) \preceq MI$ for all $x$ in the level set $\Omega = \{x: f(x) \leq f(x_0)\}$, then the limit
\begin{equation}\label{liminfg} \liminf_{k \rightarrow \infty} \|g_k\| = 0\end{equation}
is obtained.
\end{THM}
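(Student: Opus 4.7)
My plan is to follow Powell's classical argument, combining a Zoutendijk-type summability bound from the Armijo-Wolfe conditions with a trace/determinant potential analysis of the BFGS Hessian approximations $B_k = H_k^{-1}$.

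First, I would derive Zoutendijk's inequality. Let $\theta_k$ denote the angle between $d_k$ and $-g_k$, so that $\cos\theta_k = -g_k^Td_k/(\|g_k\|\,\|d_k\|)$. The Wolfe condition (\ref{wolfe}), rewritten as $y_k^Td_k \geq (c_2-1)g_k^Td_k$, combined with $y_k^Ts_k \leq M\|s_k\|^2$ from $G \preceq MI$, yields a lower bound $t_k \geq (1-c_2)(-g_k^Td_k)/(M\|d_k\|^2)$. Substituting this into the Armijo condition (\ref{armijo}), summing telescopically over $k$, and using that $f$ is bounded below gives the standard Zoutendijk bound
$$\sum_{k=0}^\infty \cos^2(\theta_k)\,\|g_k\|^2 < \infty.$$
If the conclusion fails, then $\|g_k\| \geq \epsilon > 0$ for all sufficiently large $k$, forcing $\sum_k \cos^2\theta_k < \infty$ and in particular $\cos\theta_k \to 0$.

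Second, I would extract structural information from the BFGS update to contradict this. Convexity of $f$ with $G \preceq MI$ guarantees $y_k^Ts_k > 0$ (so the update is well-defined) and $\|y_k\|^2/(y_k^Ts_k) \leq M$ via the mean-value representation $y_k = \bigl(\int_0^1 G(x_k + \tau s_k)\,d\tau\bigr) s_k$. The standard identities
$$\opn{tr}(B_{k+1}) = \opn{tr}(B_k) - \frac{\|B_ks_k\|^2}{s_k^TB_ks_k} + \frac{\|y_k\|^2}{y_k^Ts_k}, \qquad \det(B_{k+1}) = \det(B_k)\cdot\frac{y_k^Ts_k}{s_k^TB_ks_k},$$
together with the relation $B_k s_k = -t_k g_k$, which implies $\|B_ks_k\|^2/(s_k^TB_ks_k) = q_k/\cos^2\theta_k$ for $q_k = s_k^TB_ks_k/\|s_k\|^2$, let me track both quantities along the iterations.

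The main obstacle is converting these recursions into a quantitative lower bound on $\cos\theta_k$. Powell's key device is to study the potential $\psi(B) = \opn{tr}(B) - \ln\det(B)$, which is bounded below on the cone of positive definite matrices. Combining the two identities above, telescoping $\psi(B_{k+1}) - \psi(B_k)$, and using $\|y_k\|^2/(y_k^Ts_k) \leq M$ yields an estimate of the form
$$\sum_{j=0}^{k-1} \ln\frac{1}{\cos^2\theta_j} \leq C_0 + C_1 k$$
for constants $C_0, C_1$ depending only on $M$, $B_0$, and the Wolfe parameters. This forces a positive fraction of the indices $j \leq k$ to satisfy $\cos\theta_j \geq \beta$ for some fixed $\beta > 0$ as $k \to \infty$. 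Combined with $\|g_k\| \geq \epsilon$ for large $k$, this gives $\sum_k \cos^2(\theta_k)\,\|g_k\|^2 = \infty$, contradicting the Zoutendijk bound. Therefore $\liminf_{k\to\infty} \|g_k\| = 0$, as claimed.
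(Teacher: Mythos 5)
This theorem is not proved in the paper at all: it is imported verbatim as Lemma~1 of Powell \cite{POW}, so your proposal has to be measured against Powell's original argument rather than anything in the text. Your first half (Zoutendijk) is fine: the lower bound $t_k \geq (1-c_2)(-g_k^Td_k)/(M\|d_k\|^2)$ from the Wolfe condition and $G \preceq MI$ on the (convex) level set, fed into the Armijo condition and telescoped, does give $\sum_k \cos^2(\theta_k)\|g_k\|^2 < \infty$, and the trace, determinant, and $B_ks_k = -t_kg_k$ identities you quote are all correct.

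The gap is in the potential-function step. Telescoping $\psi(B) = \opn{tr}(B) - \ln\det(B)$ gives
$\psi(B_{k+1}) - \psi(B_k) = \frac{\|y_k\|^2}{y_k^Ts_k} - \frac{q_k}{\cos^2\theta_k} - \ln m_k + \ln q_k$ with $m_k = y_k^Ts_k/\|s_k\|^2$, and after the standard estimate $\ln u - u \leq -1$ applied to $u = q_k/\cos^2\theta_k$ you are left with
$\sum_{j<k}\ln\frac{1}{\cos^2\theta_j} \leq \psi(B_0) + k(M-1) - \sum_{j<k}\ln m_j$.
To absorb the last sum into $C_0 + C_1 k$ you need $m_j$ bounded away from zero, i.e.\ $y_j^Ts_j \geq m\|s_j\|^2$ -- that is exactly strong convexity of $f$, which Powell's Lemma~1 deliberately does \emph{not} assume (only convexity and the upper bound $G \preceq MI$). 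For merely convex $f$, $m_j$ can tend to $0$ and $-\sum\ln m_j$ can grow superlinearly, so the claimed bound on $\sum\ln(1/\cos^2\theta_j)$ does not follow. This is why the Byrd--Nocedal $\psi$-argument is the proof of the \emph{strongly convex} theorem, not of this one. Powell's actual proof circumvents the issue with a cruder double pigeonhole: the trace recursion alone gives $\sum_{j<k}\|g_j\|^2/(-g_j^Td_j) \leq \opn{tr}(B_0) + kM$, so under the contradiction hypothesis $\|g_j\|\geq\epsilon$ a positive fraction of indices have $-g_j^Td_j$ bounded below; separately, the determinant recursion together with $\det(B_k) \leq (\opn{tr}(B_k)/n)^n = O(k^n)$ and $y_j^Ts_j \geq (1-c_2)(-g_j^Ts_j)$ shows a positive fraction of indices have $t_j$ bounded below; intersecting these sets contradicts $\sum_j t_j(-g_j^Td_j) < \infty$, which comes directly from the Armijo condition and boundedness below. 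If you replace your potential-function paragraph with that argument, the proof goes through.
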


In our setting, $f$ is a self-concordant and strictly convex function that is bounded below and satisfies $G(x) \preceq MI$. In order to prove that adaptive BFGS is convergent in the sense of the limit (\ref{liminfg}), it suffices to show that the adaptive step sizes $t_k$ satisfy the Armijo condition for any $c_1 < \frac{1}{2}$, and eventually satisfy the Wolfe condition for any $c_2 < 1$ (i.e. there exists some $k_0$ such that the Wolfe condition is satisfied for all $k \geq k_0$). Specifically, we prove the following two theorems that apply to \emph{every} adaptive method described by \Cref{alg:generic}.

\begin{THM}
Let $f$ be self-concordant and strictly convex. The curvature-adaptive step size $t_k = \frac{\rho_k}{(\rho_k + \delta_k)\delta_k}$ satisfies the Armijo condition for any $c_1 \leq \half$.
\end{THM}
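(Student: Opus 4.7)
The natural plan is to reduce the Armijo condition to a one-variable inequality in $\eta_k$, and then verify that inequality directly by calculus. Starting from \Cref{main_ineq}, I already have the decrease bound
$$f(x_{k+1}) \leq f(x_k) - \omega(\eta_k), \qquad \omega(z) = z - \log(1+z),$$
so the question is how the right-hand side of the Armijo inequality compares with $\omega(\eta_k)$.

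The first step is to compute $t_k g_k^T d_k$ explicitly. Since $g_k^T d_k = -\rho_k$ and $t_k = \frac{\rho_k}{(\rho_k+\delta_k)\delta_k}$, I get
$$t_k g_k^T d_k = -\frac{\rho_k^2}{(\rho_k+\delta_k)\delta_k} = -\frac{\eta_k^2}{\eta_k + 1},$$
where in the last equality I divided numerator and denominator by $\delta_k^2$ and used $\eta_k = \rho_k/\delta_k$. So the Armijo condition $f(x_{k+1}) - f(x_k) \leq c_1 t_k g_k^T d_k$ becomes
$$f(x_{k+1}) - f(x_k) \leq -c_1 \cdot \frac{\eta_k^2}{\eta_k + 1}.$$
Combining with \Cref{main_ineq}, it suffices to establish the scalar inequality
$$\omega(\eta) \geq c_1 \cdot \frac{\eta^2}{\eta+1} \qquad \text{for all } \eta \geq 0,$$
whenever $c_1 \leq \tfrac{1}{2}$.

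The second (and only substantive) step is to verify this inequality at $c_1 = \tfrac{1}{2}$, since the case of smaller $c_1$ follows by monotonicity. Define $h(\eta) = \omega(\eta) - \tfrac{1}{2}\cdot\tfrac{\eta^2}{\eta+1}$. Then $h(0) = 0$, and a short differentiation gives
$$h'(\eta) = \frac{\eta}{1+\eta} - \frac{\eta(\eta+2)}{2(\eta+1)^2} = \frac{2\eta(\eta+1) - \eta(\eta+2)}{2(\eta+1)^2} = \frac{\eta^2}{2(\eta+1)^2} \geq 0,$$
so $h(\eta) \geq 0$ for all $\eta \geq 0$, completing the proof.

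There is no real obstacle: the only non-mechanical part is recognizing that both the guaranteed decrease $\omega(\eta_k)$ and the Armijo threshold $-c_1 t_k g_k^T d_k = c_1 \eta_k^2/(\eta_k+1)$ are functions of the single scalar $\eta_k$, after which the comparison is a routine one-dimensional calculus check. The constant $\tfrac{1}{2}$ enters naturally because the matching second-order Taylor coefficients of $\omega(\eta)$ and $\tfrac{\eta^2}{\eta+1}$ at $\eta=0$ differ by exactly a factor of $2$.
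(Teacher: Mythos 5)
Your proposal is correct and follows essentially the same route as the paper: reduce the Armijo condition via \Cref{main_ineq} to the scalar inequality $\omega(\eta) \geq \tfrac{1}{2}\,\eta^2/(1+\eta)$, then verify it by checking that the difference vanishes at $0$ and has derivative $\eta^2/\bigl(2(1+\eta)^2\bigr) \geq 0$. The computations, including $t_k g_k^T d_k = -\eta_k^2/(1+\eta_k)$ and the reduction to the worst case $c_1 = \tfrac{1}{2}$, match the paper's argument exactly.
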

\begin{proof}
Let $c_1 \leq \half$. We aim to prove that $f(x_{k+1}) \leq f(x_k) + c_1 t_k g_k^Td_k$. By \Cref{main_ineq}, $f(x_{k+1}) \leq f(x_k) - \omega(\eta_k)$. Therefore, it suffices to prove that
$$\omega(\eta_k) \geq -\half t_kg_k^Td_k.$$
For brevity, we omit the index $k$. Notice that
$$-tg^Td = tg^THg = t\rho = \frac{\rho^2}{(\rho + \delta)\delta} = \frac{\rho^2/\delta^2}{\rho/\delta+1} = \frac{\eta^2}{1 + \eta}.$$
Therefore, we must prove that for $\eta \geq 0$,
$$\omega(\eta) = \eta - \log(1 + \eta) \geq \half \frac{\eta^2}{1 + \eta}.$$
Define $\zeta(z) = z - \log(1 + z) - \half \frac{z^2}{1 + z}$. Observe that $\zeta(0) = 0$ and
$$\frac{d}{dz} \zeta(z) = 1 - \frac{1}{1 + z} - \half \frac{z^2 + 2z}{(1 + z)^2} = \half \frac{z^2}{(1+z)^2}.$$
Since $\frac{d}{dz} \zeta(z) \geq 0$ for all $z \geq 0$, we conclude that $\omega(\eta) \geq \half \frac{\eta^2}{1 + \eta}$ for all $\eta \geq 0$. This completes the proof.
\end{proof}

\begin{THM}
Let $f$ be self-concordant, strictly convex, and bounded below. Suppose that $\{x_k\}_{k=0}^\infty$ is a sequence of iterates generated by \Cref{alg:generic}. For any $0 < c_2 < 1$, there exists an index $k_0$ such that for all $k \geq k_0$, the curvature-adaptive step size $t_k$ satisfies the Wolfe condition.
\end{THM}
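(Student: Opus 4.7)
The plan is to use the lower bound on the directional derivative along the ray, namely inequality (\ref{downg}) of \Cref{scineq}, applied with $d = d_k$, $x = x_k$, and $t = t_k$, to obtain a controlled lower bound on $g_{k+1}^T d_k$ in terms of $\eta_k$, and then invoke \Cref{etazero} to show this bound eventually dominates $c_2 g_k^T d_k$.

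First I would record that $g_k^T d_k = -\rho_k < 0$ since $H_k \succ 0$, so the Wolfe condition (\ref{wolfe}) is equivalent to $g_{k+1}^T d_k \geq -c_2 \rho_k$. Applying (\ref{downg}) with $\delta = \delta_k$ yields
$$g_{k+1}^T d_k \;\geq\; g_k^T d_k + \frac{\delta_k^2 t_k}{1 + \delta_k t_k} \;=\; -\rho_k + \frac{\delta_k^2 t_k}{1 + \delta_k t_k}.$$
Next I would compute the correction term explicitly. Using $\delta_k t_k = \frac{\rho_k}{\rho_k + \delta_k} = \frac{\eta_k}{\eta_k + 1}$ and $\delta_k^2 t_k = \delta_k \cdot \frac{\eta_k}{\eta_k+1} = \frac{\rho_k}{\eta_k + 1}$, the bound simplifies to
$$g_{k+1}^T d_k \;\geq\; -\rho_k + \frac{\rho_k}{2\eta_k + 1} \;=\; -\frac{2\eta_k}{2\eta_k + 1}\,\rho_k \;=\; \frac{2\eta_k}{2\eta_k + 1}\, g_k^T d_k.$$

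With this identity in hand, the rest is short. By \Cref{etazero}, which applies because $f$ is self-concordant, strictly convex, and bounded below, we have $\eta_k \to 0$. The factor $\varphi(\eta) = \frac{2\eta}{2\eta+1}$ is continuous with $\varphi(0) = 0$, so for any given $c_2 \in (0,1)$ there exists $k_0$ such that $\varphi(\eta_k) \leq c_2$ for all $k \geq k_0$. Combining with the displayed inequality and using that $g_k^T d_k < 0$ (which flips the direction of the inequality upon multiplication by the coefficient), we conclude $g_{k+1}^T d_k \geq c_2 g_k^T d_k$ for all $k \geq k_0$, which is exactly the Wolfe condition.

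The only real work is the algebraic simplification of $\frac{\delta_k^2 t_k}{1 + \delta_k t_k}$ into a clean function of $\eta_k$, together with keeping track of signs since $g_k^T d_k$ is negative; both are routine. The conceptual point is that the adaptive step size automatically produces a directional derivative correction whose relative magnitude is $\frac{2\eta_k}{2\eta_k+1}$, and since the adaptive framework forces $\eta_k \to 0$, the Wolfe condition is satisfied for any $c_2 > 0$ once $k$ is large enough.
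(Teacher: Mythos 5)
Your proof is correct and is essentially the same as the paper's: both apply inequality (\ref{downg}) at $t = t_k$, simplify $\frac{\delta_k^2 t_k}{1+\delta_k t_k}$ to $\frac{\rho_k}{1+2\eta_k}$, and invoke \Cref{etazero} to get $\eta_k \to 0$; your condition $\frac{2\eta_k}{2\eta_k+1} \leq c_2$ is algebraically identical to the paper's $\frac{1}{1+2\eta_k} \geq 1-c_2$. The only difference is cosmetic: you phrase the bound as a multiplicative factor on $g_k^T d_k$, while the paper works with the increment $g_{k+1}^T d_k - g_k^T d_k$.
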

\begin{proof}
We aim to prove that $g_{k+1}^Td_k \geq c_2 g_k^Td_k$. This is equivalent to $g(x_k + t_kd_k)^Td_k - g(x_k)^Td_k \geq -(1-c_2)g(x_k)^Td_k = (1-c_2)\rho_k$. By inequality (\ref{downg}) with $\delta = \delta_k$ and $t = t_k$, we have
\begin{equation}\label{eq:wolfedg}
g(x_k + t_kd_k)^Td_k - g(x_k)^Td_k \geq \frac{\delta_k^2t_k}{1 + \delta_k t_k} = \frac{\delta_k\rho_k}{2\rho_k + \delta_k} = \frac{1}{1 + 2\eta_k}\rho_k.\end{equation}
Since $f$ is bounded below, \Cref{etazero} implies that $\eta \rightarrow 0$, and hence there exists some $k_0$ such that $\frac{1}{1 + 2\eta_k} \geq 1 - c_2$ for all $k \geq k_0$.
\end{proof}

Note that the assumption of strict convexity also implies that $y_k^Ts_k > 0$, so the BFGS update is well-defined.

We can now immediately apply \Cref{POW_C} to deduce that adaptive BFGS is convergent. Since there always exists an index $k_0$ such that the Armijo-Wolfe conditions are satisfied for all $k \geq k_0$, we can consider the subsequent iterates $\{x_k\}_{k=k_0}^\infty$ as arising from the classical BFGS method with initial matrix $H_{k_0}$.
\begin{THM}\label{global}
Let $f$ be self-concordant, strictly convex, bounded below, \textcolor{black}{whose} Hessian satisfies $G(x) \preceq MI$ for all $x \in \Omega$. Then for the adaptive BFGS method, $\liminf_{k \rightarrow \infty} \|g_k\| = 0$.
\end{THM}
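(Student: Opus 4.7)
The plan is to reduce the theorem to Powell's classical result (\Cref{POW_C}) by observing that the adaptive BFGS method eventually behaves like a classical BFGS method with Armijo--Wolfe line searches. The two theorems immediately preceding the statement do almost all of the work: the first shows that for every $k$, the curvature-adaptive step size $t_k$ satisfies the Armijo condition with $c_1 = \tfrac{1}{2}$; the second shows that there exists an index $k_0$ such that the Wolfe condition with any fixed $c_2 \in (0,1)$ holds for all $k \geq k_0$. Combined, this means that from iterate $x_{k_0}$ onward, the sequence $\{x_k\}_{k \geq k_0}$ is produced by exactly the same update rule as classical BFGS with an inexact Armijo--Wolfe line search, starting from the point $x_{k_0}$ and using the positive definite initial matrix $H_{k_0}$.

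Next I would check that the hypotheses of \Cref{POW_C} are satisfied at this restart. Strict convexity of $f$ together with $d_k \neq 0$ guarantees $y_j^T s_j > 0$ for all $j < k_0$, so the BFGS recursion (\ref{BFGS_H}) produces a well-defined positive definite matrix $H_{k_0}$. Because the Armijo condition holds at every iteration, the objective values are monotonically decreasing, so the tail iterates $\{x_k\}_{k \geq k_0}$ remain in the original level set $\Omega = \{x : f(x) \leq f(x_0)\}$; in particular the Hessian bound $G(x) \preceq MI$ continues to hold on the relevant level set, and $f$ remains bounded below. All assumptions of \Cref{POW_C} are therefore met for the tail sequence.

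Applying \Cref{POW_C} to $\{x_k\}_{k \geq k_0}$ yields $\liminf_{k \to \infty} \|g_k\| = 0$, which is exactly the conclusion. There is no real obstacle here; the substantive work was already done in establishing that the adaptive step size is Armijo--Wolfe admissible for large $k$. The only conceptual point worth articulating explicitly is the legitimacy of the restart argument, namely that Powell's theorem is indifferent to how the initial positive definite matrix is produced, so feeding it $H_{k_0}$ from the adaptive run is valid.
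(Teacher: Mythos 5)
Your proposal is correct and follows exactly the paper's argument: invoke the two preceding theorems to show the adaptive step sizes satisfy the Armijo condition always and the Wolfe condition for all $k \geq k_0$, then treat $\{x_k\}_{k \geq k_0}$ as a classical BFGS run with initial matrix $H_{k_0}$ and apply \Cref{POW_C}. Your extra verification that $H_{k_0}$ is well-defined and positive definite and that the tail iterates stay in $\Omega$ is a welcome elaboration of details the paper leaves implicit, but it is not a different route.
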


It is also possible to directly prove \Cref{global} by analyzing the evolution of the trace and determinant of $H_k$, but the resulting proof, which is quite long, does not provide clarity on the essential properties of the adaptive step size.

It is well known that if the objective function $f$ is strongly convex, then the classical BFGS method converges $Q$-superlinearly. Let us now assume that $f$ is strongly convex, so there exist constants $0 < m \leq M$ with $mI \preceq G(x) \preceq MI$ for all $x \in \Omega$. Let $x_\ast$ denote the unique minimizer of $f$.

\begin{THM}[Lemma 4, \cite{POW}]
	Let $f$ be strongly convex, and let $\{x_k\}_{k=0}^\infty$ be the sequence of iterates generated by the BFGS method with inexact Armijo-Wolfe line searches. Then $\sum_{k=0}^\infty \|x_k - x_\ast\| < \infty$.
\end{THM}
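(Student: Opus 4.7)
The plan is to deduce summability from $R$-linear convergence of the function values $f(x_k) - f(x_\ast)$: since $f$ is strongly convex, $\tfrac{m}{2}\|x_k - x_\ast\|^2 \leq f(x_k) - f(x_\ast)$, so once we know $f(x_k) - f(x_\ast) \leq C r^k$ for some $r \in (0,1)$, we get $\|x_k - x_\ast\| \leq \sqrt{2C/m}\, r^{k/2}$ and $\sum_k \|x_k - x_\ast\|$ is dominated by a convergent geometric series. Hence the whole proof reduces to establishing that classical BFGS with Armijo-Wolfe line searches is $R$-linearly convergent in function value on a strongly convex $f$ with $mI \preceq G(x) \preceq MI$.

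To obtain the $R$-linear rate I would combine two ingredients. First, the Armijo-Wolfe conditions together with $G(x) \preceq MI$ yield a Zoutendijk-type bound
\begin{equation*}
f(x_k) - f(x_{k+1}) \geq c \cos^2\theta_k \|g_k\|^2,
\end{equation*}
where $\cos\theta_k = -g_k^T d_k / (\|g_k\|\|d_k\|)$ and $c > 0$ depends only on $c_1, c_2, M$; this is the standard consequence of the Wolfe curvature condition paired with Lipschitz continuity of $g$. Second, \Cref{gTof} supplies the Polyak-\L ojasiewicz inequality $\|g_k\|^2 \geq 2m(f(x_k) - f(x_\ast))$. Combining the two,
\begin{equation*}
f(x_{k+1}) - f(x_\ast) \leq \bigl(1 - 2mc\cos^2\theta_k\bigr)\bigl(f(x_k) - f(x_\ast)\bigr),
\end{equation*}
so $R$-linear convergence follows once $\cos\theta_k$ is shown to stay bounded away from zero on a positive fraction of iterations.

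The hard part is precisely this angle bound, since BFGS offers no a priori spectral control of $H_k$. The plan is to follow the Powell/Byrd-Nocedal route: monitor the potential $\psi(B) = \operatorname{tr}(B) - \log \det(B)$ applied to the inverse approximation $B_k = H_k^{-1}$. The BFGS update yields the explicit identities $\operatorname{tr}(B_{k+1}) = \operatorname{tr}(B_k) - \|B_k s_k\|^2/(s_k^T B_k s_k) + \|y_k\|^2/(y_k^T s_k)$ and $\det(B_{k+1}) = \det(B_k) (y_k^T s_k)/(s_k^T B_k s_k)$, and the strong-convexity bounds $y_k^T s_k \geq m\|s_k\|^2$ and $\|y_k\|^2 \leq M y_k^T s_k$ are immediate from the spectral bounds on $G$. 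Together these force $\psi(B_k)$ to remain uniformly bounded in $k$, and a telescoping argument on $\psi$ then shows that $\cos\theta_k \geq \delta$ for at least a fraction $\beta > 0$ of the indices $k \leq N$, with $\delta,\beta$ independent of $N$. Inserting this into the recursion above yields $f(x_k) - f(x_\ast) \leq (1 - 2mc\delta^2)^{\beta k}(f(x_0) - f(x_\ast))$, which is the required $R$-linear rate; summability of $\|x_k - x_\ast\|$ follows as in the first paragraph. The technical heart of the argument is the trace-determinant bookkeeping for $\psi(B_k)$; the rest is an assembly of standard tools.
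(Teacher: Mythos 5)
The paper does not prove this statement---it imports it verbatim as Lemma~4 of Powell \cite{POW}---and your reconstruction follows essentially the same route as Powell's original argument (in its streamlined Byrd--Nocedal form): trace--determinant control of $B_k$ to get $\cos\theta_k \geq \delta$ on a positive fraction of iterations, Zoutendijk plus the Polyak--\L{}ojasiewicz inequality to convert that into $R$-linear decay of $f(x_k)-f(x_\ast)$, and strong convexity to pass from function values to a summable geometric bound on $\|x_k - x_\ast\|$. The one imprecision is the claim that $\psi(B_k)=\operatorname{tr}(B_k)-\log\det(B_k)$ stays uniformly bounded: the update identities only give $\psi(B_{k+1}) \leq \psi(B_1) + ck + \sum_{j\leq k}\log\cos^2\theta_j$ with $c = M - \log m - 1$, i.e.\ at most linear growth, and it is this linear bound combined with $\psi(B_{k+1})>0$ (not uniform boundedness) that forces $\sum_{j\leq k}\log(1/\cos^2\theta_j) = O(k)$ and hence the required fraction of well-angled steps; the rest of your chain is correct as written.
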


Since the adaptive step size $t_k$ eventually satisfies the Armijo-Wolfe conditions, the same holds for BFGS with adaptive step sizes.
\begin{THM}\label{summable}
	Let $f$ be self-concordant and strongly convex. The sequence of iterates $\{x_k\}_{k=0}^\infty$ produced by adaptive BFGS satisfies $\sum_{k=0}^\infty \|x_k - x_\ast\| < \infty$.
\end{THM}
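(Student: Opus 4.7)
The plan is to reduce to Powell's Lemma 4 (the cited result on summability for strongly convex $f$) by showing that from some index onward, the adaptive BFGS iterates are indistinguishable from iterates of classical BFGS whose step sizes happen to satisfy the Armijo--Wolfe conditions. The inspection of Powell's proof shows that it never exploits the specific line search procedure---only that the chosen $t_k$ satisfies the Armijo condition (for some fixed $c_1 < \tfrac{1}{2}$) and the Wolfe condition (for some fixed $c_2 \in (c_1,1)$) at each iteration under consideration.

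First I would fix constants $c_1 = \tfrac{1}{4}$ (say) and $c_2 = \tfrac{1}{2}$. By the Armijo theorem just proved, the adaptive step $t_k$ satisfies $f(x_{k+1}) \leq f(x_k) + c_1 t_k g_k^T d_k$ for \emph{every} $k \geq 0$. By the Wolfe theorem just proved, there exists $k_0$ such that $g_{k+1}^T d_k \geq c_2 g_k^T d_k$ holds for every $k \geq k_0$. Combining these, for all $k \geq k_0$ the adaptive step $t_k$ satisfies the full Armijo--Wolfe pair.

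Next I would observe that the BFGS update formula (\ref{BFGS_H}) depends only on $x_k$, $x_{k+1}$, $g_k$, $g_{k+1}$, and $H_k$; it does not refer to how $t_k$ was produced. Therefore the tail sequence $\{x_k\}_{k=k_0}^\infty$, together with its Hessian approximations $\{H_k\}_{k=k_0}^\infty$, coincides with a run of the classical BFGS method initialized at $(x_{k_0}, H_{k_0})$ in which some valid Armijo--Wolfe line search returned exactly $t_k$ at each step. Since $f$ is strongly convex, Powell's Lemma 4 applies to this tail run, yielding $\sum_{k=k_0}^\infty \|x_k - x_\ast\| < \infty$. The prefix $\sum_{k=0}^{k_0-1} \|x_k - x_\ast\|$ is a finite sum of finite terms, so adding it preserves summability and gives $\sum_{k=0}^\infty \|x_k - x_\ast\| < \infty$.

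The only nontrivial point is justifying the ``indistinguishability'' step: we must be sure that Powell's proof does not rely on $t_k$ being the \emph{first} or \emph{unique} output of a specific line search routine. A quick reading of \cite{POW} confirms that only the two scalar inequalities (\ref{armijo})--(\ref{wolfe}) are used, so this is immediate; I anticipate no real obstacle beyond explicitly invoking this observation. The finiteness of $\sum_{k=0}^{k_0-1}\|x_k-x_\ast\|$ requires only that the iterates lie in $\mathbb{R}^n$, which is trivial. Strong convexity of $f$ on $\Omega$ (and the descent property $f(x_{k+1}) \leq f(x_k)$ from \Cref{main_ineq}, which keeps the iterates in $\Omega$) ensures that the hypotheses of Powell's lemma are met for the tail sequence.
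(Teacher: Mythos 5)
Your proof is correct and follows essentially the same route as the paper: the paper likewise treats the tail $\{x_k\}_{k=k_0}^\infty$ as a run of classical BFGS from $(x_{k_0},H_{k_0})$ whose steps happen to satisfy the Armijo--Wolfe conditions, and then invokes Powell's Lemma 4. Your added care in fixing admissible constants $c_1 < c_2$ and in noting that Powell's argument uses only the two scalar inequalities is a welcome but not essentially different elaboration of the paper's one-line justification.
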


In the proof of superlinear convergence for the classical BFGS method, it is assumed that the Hessian $G(x)$ is Lipschitz continuous. However, it is unnecessary to make this assumption in our setting, as $G(x)$ is necessarily Lipschitz when $f$ is self-concordant and $G(x)$ is bounded above. This fact is not difficult to establish, but we provide a proof for completeness.

\begin{THM}\label{scf_cts}
	If $f$ is self-concordant and satisfies $G(x) \preceq MI$ for all $x \in \Omega$, then $G(x)$ is Lipschitz continuous on $\Omega$, with constant $2M^{3/2}$.
\end{THM}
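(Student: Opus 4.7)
The plan is to bound $\|G(y) - G(x)\|$ by expressing the difference as an integral of the third derivative along the line segment from $x$ to $y$, and then controlling that third derivative using self-concordance together with the Hessian bound $G \preceq MI$.

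First I would note that $\Omega$ is convex (since $f$ is convex), so for any $x, y \in \Omega$ the whole segment $x + t(y-x)$ for $t \in [0,1]$ lies in $\Omega$. Using the fundamental theorem of calculus applied to the matrix-valued map $t \mapsto G(x + t(y-x))$, I would write, for arbitrary $u, v \in \RR^n$,
\begin{equation*}
v^T\bigl(G(y) - G(x)\bigr) u = \int_0^1 \nabla^3 f\bigl(x + t(y-x)\bigr)\lb y - x, u, v\rb \, dt.
\end{equation*}

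Next I would invoke the equivalent form of self-concordancy from \Cref{equiv_scf}: at each point $z_t = x + t(y-x)$,
\begin{equation*}
\bigl|\nabla^3 f(z_t)\lb y-x, u, v\rb\bigr| \leq 2\, \|y-x\|_{z_t} \|u\|_{z_t} \|v\|_{z_t}.
\end{equation*}
Since $z_t \in \Omega$ and $G(z_t) \preceq MI$, each local norm $\|h\|_{z_t} = \sqrt{h^T G(z_t) h}$ is bounded by $\sqrt{M}\,\|h\|$. Thus the integrand is pointwise bounded by $2 M^{3/2} \|y-x\|\,\|u\|\,\|v\|$, and integrating over $t \in [0,1]$ preserves this constant bound. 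Taking the supremum over unit vectors $u$ and $v$ yields $\|G(y) - G(x)\| \leq 2 M^{3/2} \|y - x\|$.

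The only mild subtlety is justifying the integral representation and the application of \Cref{equiv_scf} pointwise in $t$, which is routine since $f \in \mathcal{C}^3$ and the third derivative is continuous along the compact segment. There is no substantive obstacle: the argument is essentially a one-line calculation once the polarized self-concordance inequality is combined with the conversion between the local norm and the Euclidean norm via $G \preceq MI$.
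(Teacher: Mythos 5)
Your proof is correct and follows essentially the same route as the paper's: both express the difference of Hessians as an integral of $\nabla^3 f$ along the segment, bound the integrand via the polarized self-concordance inequality of \Cref{equiv_scf}, and convert local norms to Euclidean norms using $G \preceq MI$. The only cosmetic difference is that you work with the bilinear form $v^T(G(y)-G(x))u$ and take a supremum over two unit vectors, whereas the paper uses the quadratic form $v^T(\cdot)v$ and appeals to the eigenvalue characterization of the operator norm of the symmetric matrix $G(x)-G(y)$; your explicit remark that $\Omega$ is convex (so the segment stays where $G \preceq MI$ holds) is a detail the paper leaves implicit.
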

\begin{proof}
	Let $x,y \in \Omega$, and let $e = x - y$. Let $v \in \RR^n$ be any unit vector. By Taylor's Theorem, we have
	$$v^TG(x)v = v^TG(y)v + \int_0^1 \nabla^3 f(y+\tau e) \lb v, v, e \rb d\tau. $$
	Hence, by \Cref{equiv_scf},
	\begin{align*}
	|v^T(G(x) - G(y))v| &\leq \int_0^1 |\nabla^3 f(y+\tau e)\lb v,v,e \rb | d\tau \\
	&\leq 2\int_0^1 v^T G(y+\tau e) v \sqrt{ e^TG(y+\tau e) e} d\tau \\
	&\leq 2\int_0^1 M^{3/2} \|e\| d\tau = 2M^{3/2} \|x - y\|.
	\end{align*}
	Therefore, the eigenvalues of $G(x) - G(y)$ are bounded in norm by $2M^{3/2}\|x - y\|$. It follows that $\|G(x) - G(y)\| \leq 2M^{3/2}\|x - y\|$, so $G(x)$ is Lipschitz continuous.
\end{proof}

It is well known that the BFGS method is invariant under an affine change of coordinates, so we may assume without loss of generality that $G(x_\ast) = I$. This corresponds to considering the function $\wt{f}(\wt{x}) = f(G(x_\ast)^{-1/2}\wt{x})$ and performing a change of coordinates $\wt{x} = G(x_\ast)^{1/2}x$. By \cite[Theorem 4.1.2]{N_ICP}, the function $\wt{f}$ is also self-concordant, with the same $\kappa$ as for $f$.

To complete the proof of superlinear convergence, we use results established by Dennis and Mor\'{e} \cite{DM1} and Griewank and Toint \cite{GT}. In \cite[\S4]{GT}, Griewank and Toint prove that, given \Cref{summable} and Lipschitz continuity of $G(x)$ (\Cref{scf_cts}), the following limit holds:
\begin{equation}\label{dm} \lim_{k \rightarrow \infty} \frac{ \|(B_k - I)d_k\|}{\|d_k\|} = 0 \end{equation}
Furthermore, the argument in \cite{GT} shows that both $\{ \|B_k\| \}_{k=0}^\infty$ and $\{ \|H_k\| \}_{k=0}^\infty$ are bounded. Writing $B_kd_k = -g_k$ and $-d_k = H_kg_k$, \textcolor{black}{ and using the fact that $\|d_k\| \leq \|H_k\|\|g_k\| \leq \Gamma \|g_k\|$, where $\Gamma$ is an upper bound on the sequence of norms $\{\|H_k\|\}_{k=0}^\infty$}, we have an equivalent limit
\begin{equation}\label{dm2} \lim_{k \rightarrow \infty} \frac{ \|H_kg_k - g_k\|}{\|g_k\|} = 0 \end{equation}
This enables us to prove that the adaptive step sizes $t_k$ converge to 1, which is necessary for superlinear convergence.
\begin{THM}\label{t1}
The curvature-adaptive step sizes $t_k$ in the adaptive BFGS method converge to 1.
\end{THM}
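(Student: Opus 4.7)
The plan is to express $t_k$ in terms of the ratios $\rho_k/\delta_k^2$ and $\rho_k/\delta_k$, and then show that under the change of coordinates making $G(x_\ast)=I$, both ratios have convenient limits. The algebraic identity I want to use is
$$ t_k \;=\; \frac{\rho_k}{(\rho_k+\delta_k)\delta_k} \;=\; \frac{\rho_k/\delta_k^2}{1 + \rho_k/\delta_k}. $$
So it suffices to show that $\rho_k/\delta_k^2 \to 1$ and $\rho_k/\delta_k \to 0$; the second will follow from the first together with $\delta_k \to 0$.

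First I would invoke \Cref{summable}: summability of $\|x_k - x_\ast\|$ implies $x_k \to x_\ast$, hence $g_k \to 0$ and, by continuity, $G_k \to G(x_\ast) = I$ (after the affine normalization discussed just before the theorem). Combined with the boundedness of $\|H_k\|$ stated in the paragraph preceding the theorem (from the Griewank--Toint argument), this gives $d_k = -H_k g_k \to 0$ and $\delta_k^2 = d_k^T G_k d_k \to 0$. I can assume $g_k \neq 0$ for all $k$, since otherwise the algorithm has reached the optimum.

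Next I would decompose $H_k g_k = g_k + w_k$ where, by the Dennis--Mor\'e-type limit (\ref{dm2}), $\|w_k\|/\|g_k\| \to 0$. Expanding $\rho_k$ and $\delta_k^2$:
$$ \rho_k = g_k^T H_k g_k = \|g_k\|^2 + g_k^T w_k, \qquad \delta_k^2 = (g_k+w_k)^T G_k (g_k+w_k). $$
Using Cauchy--Schwarz and the uniform bound $G_k \preceq MI$, each cross term is $O(\|g_k\|\|w_k\|) = o(\|g_k\|^2)$ and $w_k^T G_k w_k = o(\|g_k\|^2)$. Meanwhile $g_k^T G_k g_k = \|g_k\|^2 + g_k^T(G_k - I)g_k$, and since $G_k \to I$, the correction is $o(\|g_k\|^2)$ as well. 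Therefore
$$ \rho_k = \|g_k\|^2\bigl(1 + o(1)\bigr), \qquad \delta_k^2 = \|g_k\|^2\bigl(1 + o(1)\bigr), $$
so $\rho_k/\delta_k^2 \to 1$, and consequently $\rho_k/\delta_k = (\rho_k/\delta_k^2)\,\delta_k \to 1\cdot 0 = 0$.

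Plugging into the identity for $t_k$ gives $t_k \to 1/(1+0) = 1$, as required. The main obstacle is really just assembling the three ingredients in the right order --- convergence of the iterates (from \Cref{summable}), the Dennis--Mor\'e limit (\ref{dm2}), and the normalization $G(x_\ast) = I$ --- so that the asymptotic equivalence $\rho_k \sim \delta_k^2 \sim \|g_k\|^2$ falls out cleanly; once that is in hand, the conclusion is immediate from the algebraic form of $t_k$.
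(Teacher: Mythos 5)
Your proposal is correct and follows essentially the same route as the paper: the same identity $t_k = \frac{\rho_k/\delta_k^2}{1+\rho_k/\delta_k}$, the same decomposition $H_kg_k = g_k + w_k$ with $\|w_k\|/\|g_k\|\to 0$ from the limit (\ref{dm2}), and the same use of $G_k \preceq MI$ and $G_k \to I$ to get $\rho_k \sim \delta_k^2 \sim \|g_k\|^2$. The only cosmetic difference is that you deduce $\rho_k/\delta_k \to 0$ from $\delta_k \to 0$, whereas the paper cites \Cref{etazero} directly; both are valid.
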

\begin{proof}
We omit the index $k$ for brevity, and define $u = Hg - g$. Since $t$ can be expressed as $t = \frac{\eta/\delta}{1 + \eta}$, and we have from \Cref{etazero} that $\eta \rightarrow 0$, it suffices to show that $\frac{\eta}{\delta}$ converges to 1.
\begin{align*}
\frac{\eta}{\delta} = \frac{\rho}{\delta^2} &= \frac{ g^THg}{g^THGHg} \\
&= \frac{ g^Tg + g^Tu}{g^TGg + 2g^TGu + u^TGu} \\
&= \frac{1 + \frac{g^Tu}{g^Tg}}{\frac{g^TGg}{g^Tg} + 2\frac{g^TGu}{g^Tg} + \frac{u^TGu}{g^Tg}}
\end{align*}
The limit (\ref{dm2}) implies that $\frac{\|u\|}{\|g\|} \rightarrow 0$. Hence, the Cauchy-Schwarz inequality and the upper bound $G(x) \preceq MI$ imply that $\frac{g^Tu}{g^Tg}, \frac{g^TGu}{g^Tg}, \frac{u^TGu}{g^Tg}$ converge to 0. Since $G = G(x_k)$ and $x_k \rightarrow x_\ast$, we have $G \rightarrow I$, and therefore $\frac{g^TGg}{g^Tg} \rightarrow 1$. It follows that $\frac{\eta}{\delta}$, and therefore $t$, converges to 1.
\end{proof}

We now make a slight modification to the Dennis-Mor\'{e} characterization of superlinear convergence. \textcolor{black}{Using the triangle inequality twice and the fact that $G(x_\ast) = I$, we obtain}
\begin{align*}
\frac{\|(B_k - I)s_k)\|}{\|s_k\|} &= \frac{ \| t_kg_{k+1} - t_kg_k - G(x_\ast)s_k \textcolor{red}{-} t_kg_{k+1}\|}{\|s_k\|} \\
&\geq t_k\frac{ \|g_{k+1}\|}{\|s_k\|} - \frac{ \|t_kg_{k+1} - t_kg_k - t_kG(x_\ast)s_k \textcolor{red}{-} (1-t_k)G(x_\ast)s_k\|}{\|s_k\|} \\
&\geq t_k \frac{ \|g_{k+1}\|}{\|s_k\|} - \frac{ t_k \|\int_0^1 (G(x_k + \tau s_k) - G(x_\ast))s_k d\tau \|}{\|s_k\|} - |1-t_k| \frac{ \|G(x_\ast)s_k\|}{\|s_k\|}.
\end{align*}
Rearranging, and applying \Cref{scf_cts},
\begin{equation}\label{finalgs}
\frac{ \|g_{k+1}\|}{\|s_k\|} \leq \frac{1}{t_k} \frac{\|(B_k - I)s_k)\|}{\|s_k\|} + 2M^{3/2}\max\{ \|x_k - x_\ast\|, \|x_{k+1} - x_\ast\|\} + \frac{|1 - t_k|}{t_k}M .
\end{equation}
Since $x_k \rightarrow x_\ast$ by \Cref{global} and $t_k \rightarrow 1$ by \Cref{t1}, both of the latter terms converge to 0. Finally, \cref{dm} implies that $\frac{\|(B_k - I)s_k\|}{\|s_k\|}$ converges to 0, so it follows from \cref{finalgs} that $\frac{ \|g_{k+1}\|}{\|s_k\|}$ converges to 0.

Since $f$ is strongly convex, $\|g(x)\| \geq m \|x - x_\ast\|$. Hence, we find that
\begin{align*}
\frac{ \|g_{k+1}\|}{\|s_k\|} \geq \frac{m \|x_{k+1} - x_\ast\|}{\|x_{k+1} - x_\ast\| + \|x_k - x_\ast\|},
\end{align*}
which implies that $\frac{ \|x_{k+1} - x_\ast\|}{\|x_k - x_\ast\|} \rightarrow 0$. Thus, we have the following:
\begin{THM}
Suppose that $f$ is self-concordant, and strongly convex on $\Omega = \{x \in \RR^n: f(x) \leq f(x_0)\}$. Then the adaptive BFGS method converges $Q$-superlinearly.
\end{THM}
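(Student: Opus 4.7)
The plan is to reduce $Q$-superlinear convergence to a Dennis--Mor\'e-type ratio condition and then invoke the machinery already assembled. By definition, I must show $\|x_{k+1} - x_\ast\| / \|x_k - x_\ast\| \to 0$. Strong convexity gives $\|g_{k+1}\| \geq m \|x_{k+1} - x_\ast\|$, while the triangle inequality yields $\|s_k\| \leq \|x_{k+1} - x_\ast\| + \|x_k - x_\ast\|$, so it suffices to prove $\|g_{k+1}\| / \|s_k\| \to 0$.

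To obtain this, I will work in coordinates in which $G(x_\ast) = I$, which is permissible because BFGS is affine-invariant and self-concordance (with the same $\kappa$) is preserved under a linear change of variables. Combining \Cref{global}, \Cref{summable}, and \Cref{scf_cts}, the iterates are summable about $x_\ast$ and $G$ is Lipschitz with constant $2M^{3/2}$; these are precisely the hypotheses under which the Griewank--Toint argument produces the Dennis--Mor\'e limit $\|(B_k - I)s_k\|/\|s_k\| \to 0$ together with uniform boundedness of $\{\|B_k\|\}$ and $\{\|H_k\|\}$.

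The main obstacle is that the classical Dennis--Mor\'e characterization assumes a unit step, whereas here the step is $t_k \neq 1$. To bridge this, I will use $B_k s_k = t_k B_k d_k = -t_k g_k$ and add and subtract $t_k g_{k+1}$ to write
\[
(B_k - I) s_k = -t_k g_{k+1} + t_k\bigl(g_{k+1} - g_k - G(x_\ast) s_k\bigr) - (1 - t_k) G(x_\ast) s_k.
\]
Applying the triangle inequality isolates $t_k \|g_{k+1}\|/\|s_k\|$ against two residuals. The first is controlled via the integral identity $g_{k+1} - g_k = \int_0^1 G(x_k + \tau s_k) s_k\, d\tau$ and \Cref{scf_cts}, yielding a bound proportional to $\max\{\|x_k - x_\ast\|, \|x_{k+1} - x_\ast\|\}$, which tends to $0$ by \Cref{global}. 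The second residual is $|1-t_k|\,\|G(x_\ast) s_k\|/\|s_k\| \leq |1-t_k| M$, which tends to $0$ by \Cref{t1}.

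Combining these bounds with the Dennis--Mor\'e limit gives $\|g_{k+1}\|/\|s_k\| \to 0$. Then the strong-convexity lower bound $\|g_{k+1}\| \geq m\|x_{k+1} - x_\ast\|$ together with the triangle inequality on $\|s_k\|$ forces $\|x_{k+1} - x_\ast\| / \|x_k - x_\ast\| \to 0$, which is $Q$-superlinear convergence. The delicate point is the treatment of the $(1-t_k)$ term: without \Cref{t1}, the adaptive step would leave a nonvanishing contamination proportional to $\|G(x_\ast) s_k\|/\|s_k\|$, and the Dennis--Mor\'e route would collapse.
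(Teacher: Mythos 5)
Your proposal is correct and follows essentially the same route as the paper: the identical decomposition $(B_k - I)s_k = -t_kg_{k+1} + t_k(g_{k+1}-g_k-G(x_\ast)s_k) - (1-t_k)G(x_\ast)s_k$ after normalizing $G(x_\ast)=I$, the Griewank--Toint/Dennis--Mor\'e limit supplied by \Cref{summable} and \Cref{scf_cts}, the step-size limit \Cref{t1} to kill the $(1-t_k)$ term, and the final strong-convexity argument converting $\|g_{k+1}\|/\|s_k\|\to 0$ into the $Q$-superlinear ratio. No gaps.
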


By the same reasoning, the results in \cite{BNY} and \cite{GT} imply that these convergence theorems also hold for the adaptive versions of the quasi-Newton methods in \emph{Broyden's convex class}, with the exception of the DFP method. The adaptive versions of the Block BFGS methods proposed in \cite{BLOCKBFGS} can also be shown to be $Q$-superlinearly convergent.

\subsection{Hybrid Step Selection}\label{sub:hybrid}
\textcolor{black}{Consider the damped Newton method of Nesterov, which is obtained by setting $H_k = G_k^{-1}$. This yields $\rho_k = g_k^TG_k^{-1}g_k$ and $\delta_k = \sqrt{g_kG_k^{-1}G_kG_k^{-1}g_k} = \sqrt{\rho}$, whence $\eta = \rho/\delta = \delta$. The curvature-adaptive step size $t$ then reduces to
$$t = \frac{\eta/\delta}{1 + \eta} = \frac{1}{1 + \delta}.$$
When $\delta$ is large (for example, if the initial point $x_0$ is chosen poorly), then the curvature-adaptive step size may be very small, even when the inverse Hessian approximation $H_k$ is good. This conservatism is the price of the curvature-adaptive step size guaranteeing global convergence (in contrast to Newton's method, which is \emph{not} globally convergent, and to gradient descent, which may diverge if the step size is too large).} A small step $t_kd_k$ is likely to result in $t_{k+1}$ also being small\footnotemark. Thus, when the initial $\delta$ is large, a method using adaptive step sizes may produce a long succession of small steps. This suggests the following heuristic for selecting step sizes:
\begin{enumerate}
\item Select a set $T_k$ of candidate step sizes for $t_k$.
\item At step $k$, test the elements of $T_k$ in order until a candidate step size is found which satisfies the Armijo condition (\ref{armijo}).
\item If no element of $T_k$ satisfies the Armijo condition, then set $t_k$ to be the adaptive step size.
\end{enumerate}
For instance, in our numerical experiments reported in \Cref{numerical}, we take $T_k$ to be $(1, \frac{1}{4}, \frac{1}{16})$ for all $k$. This allows the method to take steps of size $t_k = 1$ when 1 satisfies the Armijo condition, which is desirable for reducing the number of iterations needed before superlinear convergence kicks in.

\footnotetext{\textcolor{black}{
As an illustrative example, consider applying the damped Newton method to the quadratic function $f(x) = \frac{1}{2}\|x\|^2$. Since $d_k = -x_k$ and $\delta_k = \|x_k\|$, we have $t_k = \frac{1}{1 + \|x_k\|}$ and $x_{k+1} = \frac{\|x_k\|}{1 + \|x_k\|} x_k$. If $\|x_0\|$ is large, then it is clear that the damped Newton method will take many tiny steps until $\|x_k\|$ is sufficiently reduced. This is in stark contrast to Newton's method, which reaches the minimizer after a single step.}}

We refer to this scheme as \emph{hybrid step selection}. For a proper choice of $T_k$, hybrid step selection avoids the disadvantage of exclusively using adaptive step sizes, where the step size may be small for many iterations. It will also generally be more efficient to compute than a full line search, since no more than $|T_k|$ candidate step sizes are tested before switching to the adaptive step size.

\section{Numerical Experiments}\label{numerical}
To compare our adaptive methods to classical algorithms, we solve several binary classification problems using \emph{logistic regression}. In these problems, the objective function to be minimized has the form
\begin{equation}
\label{logreg}
L(w) = \frac{1}{N} \sum_{i=1}^N \log(1 + \exp(-y_i x_i^Tw)) + \frac{1}{2N} \|w\|_2^2.\end{equation}
where the training data $\{(x_1,y_1),\ldots, (x_N, y_N)\}$ consists of feature vectors $x_i \in \RR^n$ and classifications $y_i \in \{-1,+1\}$. Zhang and Xiao \cite{ZX} showed that the logistic regression objective function $L(w)$ is self-concordant.
\begin{THM}[Lemma 1, \cite{ZX}]\label{lrsc}
	Let $B = \max_i \|x_i\|$. The scaled function $\frac{B^2 N}{4}L(w)$ is standard self-concordant.
\end{THM}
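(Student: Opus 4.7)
The plan is to reduce the analysis to the univariate logistic function $\phi(u)=\log(1+e^{u})$ and then use the chain rule together with the regularizer $\frac{1}{2N}\|w\|^{2}$, which is crucial because the logistic term by itself is \emph{not} self-concordant on the real line. Write $L(w)=\frac{1}{N}\sum_{i=1}^{N}\phi(-y_i x_i^{T}w)+\frac{1}{2N}\|w\|^{2}$. Since the regularizer is a convex quadratic its third derivative vanishes, so it contributes to $\nabla^{2}L$ a positive-definite term $\frac{1}{N}I$ but nothing to $\nabla^{3}L$. This free positive-definite contribution is exactly what I will exploit at the end.

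Next I would compute the derivatives of $\phi$ in terms of $\sigma(u)=e^{u}/(1+e^{u})$, obtaining $\phi''(u)=\sigma(u)(1-\sigma(u))$ and $\phi'''(u)=\sigma(u)(1-\sigma(u))(1-2\sigma(u))$. The key pointwise observation is the inequality $|\phi'''(u)|\le \phi''(u)$, which holds because $|1-2\sigma(u)|\le 1$. Applying the chain rule to each summand $\ell_i(w)=\phi(-y_i x_i^{T}w)$, and using $y_i^{2}=1$, yields
\begin{align*}
\nabla^{2}\ell_i(w)[h,h] &= \phi''(-y_i x_i^{T}w)\,(x_i^{T}h)^{2}, \\
\nabla^{3}\ell_i(w)[h,h,h] &= -y_i\,\phi'''(-y_i x_i^{T}w)\,(x_i^{T}h)^{3}.
\end{align*}

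Summing, and invoking the pointwise bound $|\phi'''|\le \phi''$ together with $|x_i^{T}h|\le B\|h\|$, I obtain
\[
|\nabla^{3}L(w)[h,h,h]| \le \frac{B\|h\|}{N}\sum_{i=1}^{N}\phi''(-y_i x_i^{T}w)(x_i^{T}h)^{2} \le B\|h\|\,\nabla^{2}L(w)[h,h].
\]
Here is where the regularizer pays off: the quadratic term contributes $\frac{1}{N}\|h\|^{2}$ to $\nabla^{2}L(w)[h,h]$, so $\|h\|\le \sqrt{N}\,(\nabla^{2}L(w)[h,h])^{1/2}$. Substituting this in gives
\[
|\nabla^{3}L(w)[h,h,h]| \le B\sqrt{N}\,(\nabla^{2}L(w)[h,h])^{3/2},
\]
so $L$ is self-concordant with constant $\kappa = B\sqrt{N}$.

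Finally, I would apply the standard scaling identity noted in the definition: if $f$ has self-concordance constant $\kappa$, then $\frac{\kappa^{2}}{4}f$ is standard self-concordant (a short check: multiplying $f$ by $c$ multiplies $|\nabla^{3}f|/(\nabla^{2}f)^{3/2}$ by $c^{-1/2}$). Taking $c=\kappa^{2}/4 = B^{2}N/4$ gives that $\frac{B^{2}N}{4}L(w)$ is standard self-concordant, as claimed. The main conceptual step (and the only non-routine one) is recognizing that the logistic loss is not self-concordant on its own and that the proof must trade one factor of $|x_i^{T}h|$ for $B\|h\|$ and then use the regularizer's Hessian floor to dominate $\|h\|$ by a multiple of $(\nabla^{2}L[h,h])^{1/2}$; everything else is direct computation.
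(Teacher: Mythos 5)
Your proof is correct and complete; the paper itself states this result as an import from Zhang and Xiao \cite{ZX} without reproving it, and your argument is exactly the standard one from that reference: bound $|\phi'''|\le\phi''$ for the univariate logistic function, trade one factor of $|x_i^Th|$ for $B\|h\|$, and use the Hessian floor $\frac{1}{N}I$ from the regularizer to convert $\|h\|$ into $\sqrt{N}\,(\nabla^2 L(w)[h,h])^{1/2}$, giving $\kappa=B\sqrt{N}$ and hence the scaling factor $\kappa^2/4=B^2N/4$. All the computations (the derivatives of $\phi$, the chain-rule identities with $y_i^2=1$, and the scaling identity) check out.
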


In our tests, we compared seven algorithms:
\begin{enumerate}
	\item BFGS with adaptive step sizes (BFGS-A).
	\item BFGS with Armijo-Wolfe line search (BFGS-LS).
	\item BFGS with hybrid step selection (BFGS-H), using $T_k = (1, \frac{1}{4}, \frac{1}{16})$.
	\item L-BFGS with adaptive step sizes (LBFGS-A), using the past $\ell = \min \{\frac{n}{2}, 20\}$ curvature pairs.
	\item L-BFGS with Armijo-Wolfe line search (LBFGS-LS), using the past $\ell = \min \{ \frac{n}{2}, 20\}$ curvature pairs.
	\item Gradient descent with adaptive step sizes (GD-A).
	\item Gradient descent with Armijo-Wolfe line search (GD-LS).
\end{enumerate}

An initial Hessian approximation $H_0$ must be provided for the BFGS and L-BFGS methods. It is easy, but not necessarily effective, to simply take $H_0 = I$. Another common strategy for initializing $H_0$, described in \cite{NW}, that is often quite effective, is to take $H_0 = I$ on the first step, and then, before performing the first BFGS update (\ref{BFGS_H}), scale $H_0$:
\begin{equation}\label{Idscale}H_0 \leftarrow \frac{y_0^Ts_0}{y_0^Ty_0} I .\end{equation}
It is easy to verify that the scaling factor $y_0^Ts_0/y_0^Ty_0$ lies between the smallest and largest eigenvalues of the inverse of the average Hessian $\ov{G} = \int_0^1 G(x_0 + \tau s_0) d\tau$ along the initial step. 

Similarly, for the L-BFGS method, the initial matrix used at step $k+1$ in the two-loop recursion is chosen as:
$$H_0 \leftarrow \frac{y_k^Ts_k}{y_k^Ty_k} I.$$
We refer to this as \emph{identity scaling}.

The line search used the \texttt{WolfeLineSearch} routine from the minFunc software package \cite{MINFUNC}. The Armijo-Wolfe parameters were $c_1 = 0.1, c_2 = 0.75$, and the line search was configured to use an initial step size $t = 1$ and perform quadratic interpolation (\texttt{LS\_interp = 1}, \texttt{LS\_multi = 0}).

We chose six data sets from LIBSVM \cite{LIBSVM} with a variety of dimensions, which are listed in \Cref{tab:ds}. We plot the progress of each algorithm as a function of CPU time used. The progress is measured by the \emph{log gap} $\log_{10}(f(w) - f(w_\ast))$, where  $w_\ast$ is a pre-computed optimal solution.  The starting point was always set to $w_0 = 0$. All algorithms were terminated when either the gradient reached the threshold $\|g(x)\| < 10^{-7}$, or after 480 seconds of CPU time.  \textcolor{black}{A brief summary of the results can be found in \Cref{tab:iterations}, which lists the number of iterations taken by the BFGS-type methods for convergence.}

Our algorithms were implemented in Matlab 2017a and run on an Intel i5-6200U processor. While the CPU time is clearly platform-dependent, we sought to minimize implementation differences between the algorithms to make the test results as comparable as possible.
\begin{table}
	\begin{tabular}{|l|c|c|}
		\hline
		Data set & $n$ & $N$ \\ \hline
		\texttt{covtype.libsvm.binary.scale} & 55 & 581012 \\ \hline
		\texttt{ijcnn1.tr} & 23 & 35000 \\ \hline
		\texttt{leu} & 7130 & 38 \\ \hline
		\texttt{rcv1\_train.binary} & 47237 & 20242 \\ \hline
		\texttt{real-sim} & 20959 & 72309 \\ \hline
		\texttt{w8a} & 301 & 49749 \\ \hline
	\end{tabular}
	\caption{Data sets used in \Cref{numerical}}
	\label{tab:ds}
\end{table}

\begin{table}
	\begin{tabular}{|l|l|c|c|c|c|}
		\hline
		\multirow{2}{*}{Data set} & \multirow{2}{*}{$n$} & \multirow{2}{*}{Identity Scaling} & \multicolumn{3}{|c|}{Number of iterations} \\
		\cline{4-6} & & & BFGS-A & BFGS-LS & BFGS-H \\ \hline
		\multirow{2}{*}{\texttt{covtype.libsvm.binary.scale}} & \multirow{2}{*}{55} & No & 844 & 80 & 126 \\
		\cline{3-6} &  & Yes & 1532 & 458 & 479 \\ \hline
		\multirow{2}{*}{\texttt{ijcnn1.tr}} & \multirow{2}{*}{23} & No & 286 & 36 & 66 \\
		\cline{3-6} &  & Yes & 434 & 142 & 162 \\ \hline
		\multirow{2}{*}{\texttt{w8a}} & \multirow{2}{*}{301} & No & 2254 & 240 & 637 \\
		\cline{3-6} & & Yes & 2506 & 398 & 653 \\ \hline
		\multirow{2}{*}{\texttt{leu}} & \multirow{2}{*}{7130} & No & 1197 & 95 & 293 \\
		\cline{3-6} &  & Yes & 909 & 177 & 251 \\ \hline
		\multirow{2}{*}{\texttt{rcv1\_train.binary}} & \multirow{2}{*}{47237} & No & 161 & 31 & 35 \\
		\cline{3-6} &  & Yes & 284 & 217 & 232 \\ \hline
		\multirow{2}{*}{\texttt{real-sim}} & \multirow{2}{*}{20959} & No & 356 & 44 & 55\\
		\cline{3-6} & & Yes & 592 & 247 & 317 \\ \hline
	\end{tabular}
	\caption{The number of iterations until convergence of the BFGS methods.}
	\label{tab:iterations}
\end{table}

In \Cref{fig:timeS}, we plot the results for the data sets \texttt{covtype.libsvm.binary.scale}, \texttt{ijcnn1.tr}, and \texttt{w8a}. On these problems, we implemented BFGS with a dense Hessian; that is, the matrices $H_k$ were stored explicitly and updated using the formula (\ref{BFGS_H}). In \Cref{tab:iterations}, we list the number of iterations used by the BFGS-type methods.
\begin{figure}
	\centering
	\includegraphics[clip=true, scale=0.7]{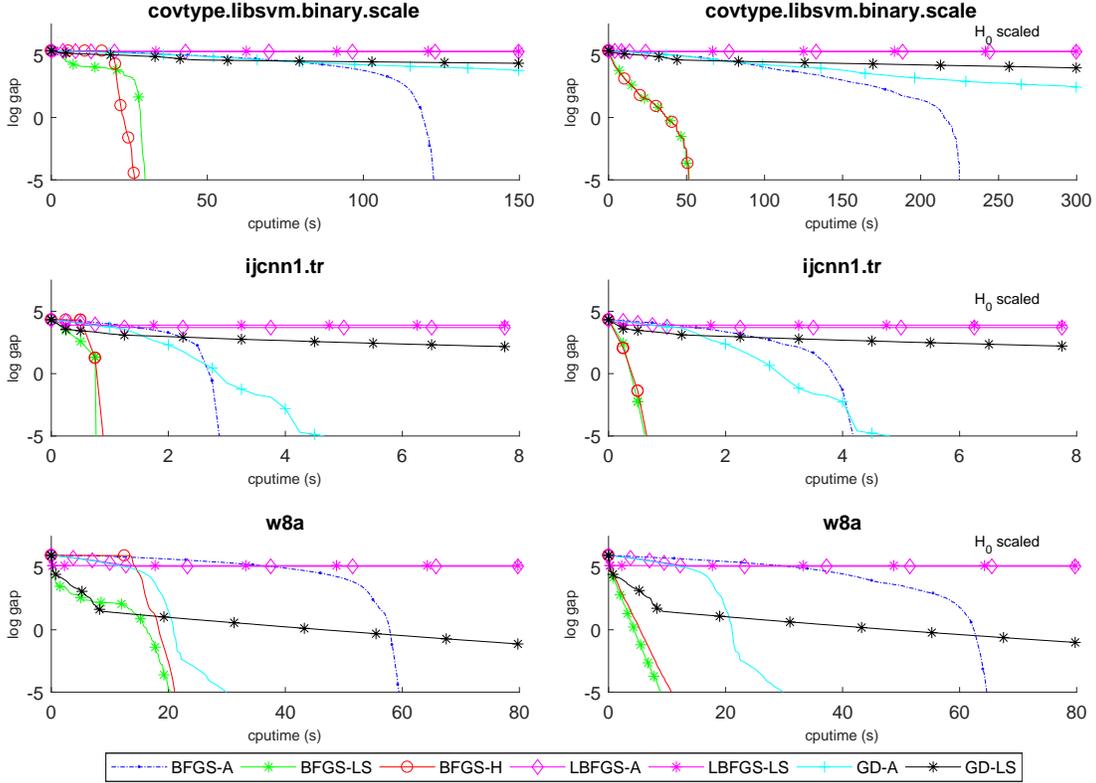}
	\caption{Experiments on problems with small $n$. The log gap is defined as $\log_{10}(f(w) - f(w_\ast))$. The loss functions are scaled to be standard self-concordant. All BFGS and L-BFGS plots on the left take $H_0 = I$, and those on the right use identity scaling.}
	\label{fig:timeS}
\end{figure}

In \Cref{fig:timeL}, we plot the results for the data sets \texttt{leu}, \texttt{rcv1\_train.binary}, and \texttt{real-sim}. These problems had a large number of variables ($n > 7000$), which made it infeasible to store $H_k$ explicitly. On these problems, BFGS was implemented using the two-loop recursion with unlimited memory, and $H_0$ was kept fixed throughout the iteration process. If the number of iterations exceeds roughly $n/4$, then this approach would in fact require more memory than storing $H_k$ explicitly. However, this never occurred in our tests, as shown in \Cref{tab:iterations}.

\begin{figure}
	\centering
	\includegraphics[clip=true, scale=0.7]{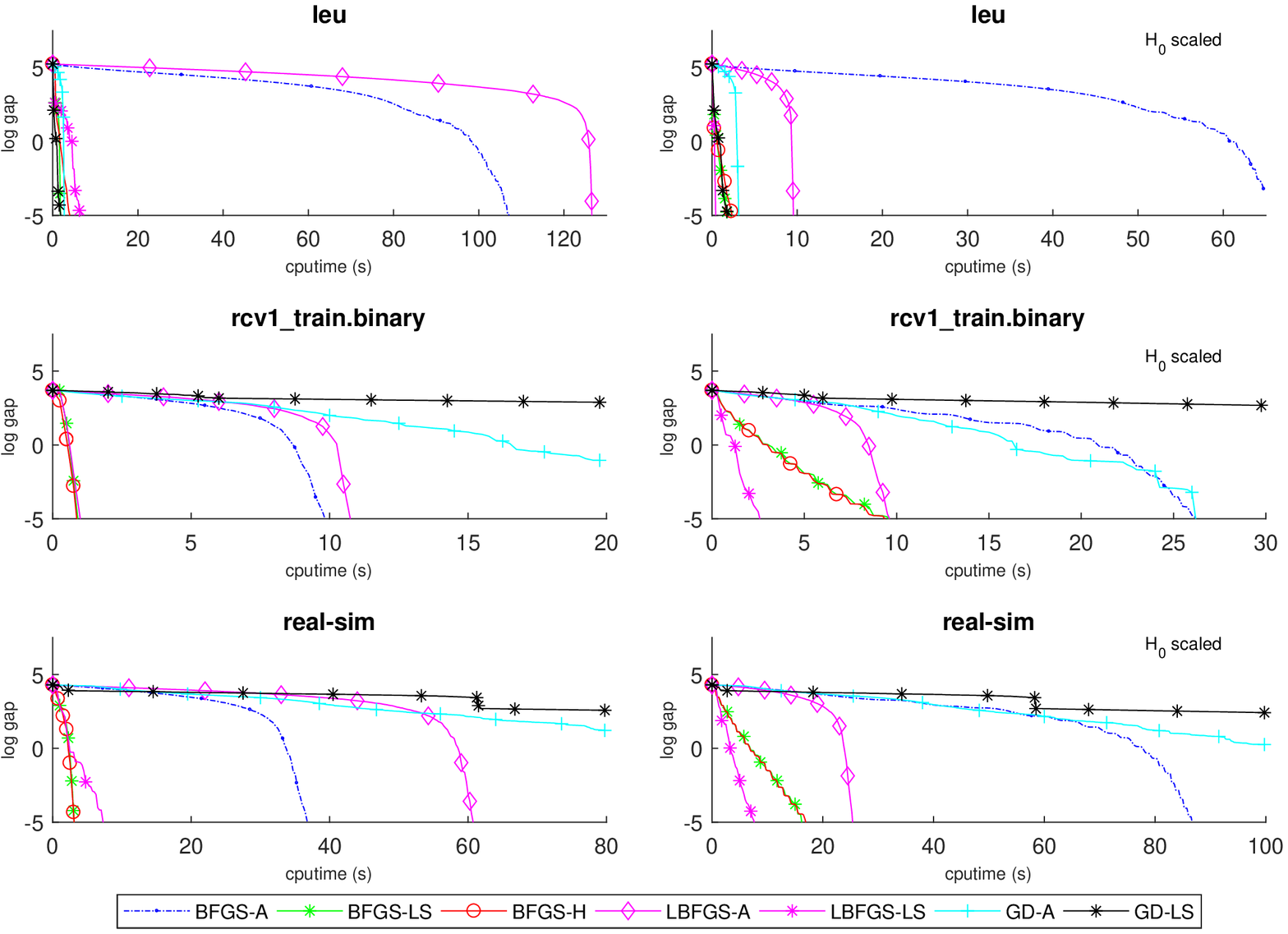}
	\caption{Experiments on problems with large $n$. The log gap is defined as $\log_{10}(f(w) - f(w_\ast))$. The loss functions are scaled to be standard self-concordant. All BFGS and L-BFGS plots on the left take $H_0 = I$, and those on the right use identity scaling.}
	\label{fig:timeL}
\end{figure}

In our tests, we found that BFGS-A required more time than BFGS-LS. Although the cost of a single step was initially lower for BFGS-A than BFGS-LS, BFGS-A often took numerous small steps in succession, making very slow progress. This situation was exactly our motivation for devising the hybrid step selection described in \Cref{sub:hybrid}, and unfortunately, appears to occur often. However, BFGS-H achieved \textcolor{black}{comparable speed to that of} BFGS-LS with $T = (1, \frac{1}{4}, \frac{1}{16})$, which suggests that always trying $t = 1$ first is an excellent heuristic. These results also provide evidence of the effectiveness of performing inexact line searches, in settings where it is practical to do so. In \Cref{tab:t}, the number of steps needed until we consistently have $t_k \approx 1$ is shown.

\begin{table}
	\begin{tabular}{|l|l|c|c|c|c|}
		\hline
		\multirow{2}{*}{Data set} & \multirow{2}{*}{$n$} & \multirow{2}{*}{Identity Scaling} & \multicolumn{3}{|c|}{Number of iterations} \\
		\cline{4-6} & & & BFGS-A & BFGS-LS & BFGS-H \\ \hline
		\multirow{2}{*}{\texttt{covtype.libsvm.binary.scale}} & \multirow{2}{*}{55} & No & 797 & 57 & 62 \\
		\cline{3-6} &  & Yes & 1378 & 2 & 2 \\ \hline
		\multirow{2}{*}{\texttt{ijcnn1.tr}} & \multirow{2}{*}{23} & No & 270 & 25 & 26 \\
		\cline{3-6} &  & Yes & 369 & 3 & 2 \\ \hline
		\multirow{2}{*}{\texttt{w8a}} & \multirow{2}{*}{301} & No & 2056 & - & 289\\
		\cline{3-6} & & Yes & 2250 & 5 & 2 \\ \hline
		\multirow{2}{*}{\texttt{leu}} & \multirow{2}{*}{7130} & No & - & -  & 42 \\
		\cline{3-6} &  & Yes & 818 & 2 & 2 \\ \hline
		\multirow{2}{*}{\texttt{rcv1\_train.binary}} & \multirow{2}{*}{47237} & No & 132 & 15 & 4 \\
		\cline{3-6} &  & Yes & 205 & 3 & 4 \\ \hline
		\multirow{2}{*}{\texttt{real-sim}} & \multirow{2}{*}{20959} & No & 294 & 18 & 17 \\
		\cline{3-6} & & Yes & 490 & 2 & 2 \\ \hline
	\end{tabular}
	\caption{The number of iterations until $t_k = 1$ was consistently accepted by BFGS-LS and BFGS-H, and, for BFGS-A, the number of iterations until $t_k \geq 0.9$ for at least 80\% of the remaining iterations. A dash `-' indicates that the condition was not met before the stopping criterion was satisfied.}
	\label{tab:t}
\end{table}

Since computing $t_k$ also requires a Hessian-vector product, the cost comparison between the adaptive step size and inexact line search reverses when the algorithm nears convergence. Initially, a Hessian-vector product is faster than performing multiple backtracking iterations and repeatedly testing for the Armijo-Wolfe conditions; however, the line search (and the hybrid step selection) will eventually accept the step size $t_k = 1$ immediately, becoming essentially free, whereas computing the adaptive step size continues to require a Hessian-vector product on every step.

Curiously, L-BFGS was far more effective on the problems with large $n$ (\Cref{fig:timeL}) than on those with small $n$ (\Cref{fig:timeS}). Both LBFGS-A and LBFGS-LS were ineffective on the problems with small $n$, which suggests that the problem lies with the step directions computed by L-BFGS, rather than the step sizes. Identity scaling was also beneficial for L-BFGS on problems with large $n$, substantially reducing the convergence time in some cases. We note that we did not experiment comprehensively with varying $\ell$, the number of curvature pairs stored in L-BFGS, and used a standard choice of $\ell = \min\{\frac{n}{2},20\}$. Other choices of $\ell$ might lead to very different results on the problems in our test set.

On the other hand, identity scaling appeared to be detrimental for the BFGS-type methods on most problems, \textcolor{black}{which can be seen from the plots in \Cref{fig:timeS} and \Cref{fig:timeL} by comparing the CPU time needed for convergence. For instance, on the data set \texttt{covtype.libsvm.binary.scale}, the time to convergence for BFGS-A increased from 120s to 225s, and from 25s to 50s for BFGS-LS and BFGS-H.} In fact, identity scaling was beneficial for the BFGS\textcolor{black}{-A} method \emph{only} on the data set \texttt{leu}. The data set \texttt{leu} appears to be quite different from the other problems tested. The number of training samples for \texttt{leu} was $m = 38$, while for all other problems, $m$ was at least 20,000. Moreover, gradient descent with Armijo-Wolfe line search (GD-LS) was among the fastest methods on \texttt{leu}, while on the other test problems it was significantly outperformed by BFGS. The iteration counts shown in \Cref{tab:iterations} and \Cref{tab:t} also indicate that identity scaling worsened the performance of the BFGS methods on every problem except \texttt{leu}. Curiously, performing identity scaling led to BFGS-H accepting $t_k = 1$ at a much earlier iteration on all problems, yet the total CPU time used by BFGS-H was longer for \texttt{covtype.libsvm.binary.scale}, \texttt{rcv1.train.binary}, and \texttt{real-sim}.

GD-A was surprisingly effective, outperforming GD-LS \textcolor{black}{on every problem except for \texttt{leu}}. This is \textcolor{black}{somewhat surprising (in light of the performance of BFGS-A and BFGS-LS), and suggests that the curvature-adaptive step size may be useful for selecting hyperparameters for first-order methods}.

\section{Application to Stochastic Optimization}\label{further}

The adaptive step size can readily be extended to \emph{stochastic} optimization methods. Consider a problem of the form
\begin{equation}\label{bigERM}
L(w) = \frac{1}{N} \sum_{i=1}^N f_i(w) + h(w).
\end{equation}
If $N$ is extremely large, as is often the case in machine learning, simply evaluating $L(w)$ is an expensive operation, and line search is entirely impractical. To solve problems of the form (\ref{bigERM}), stochastic algorithms such as Stochastic Gradient Descent (SGD, \cite{SGD}) select a random subset $S_k$ of $\{f_1,\ldots,f_N\}$ at step $k$ and compute the gradient for the subsampled problem
\begin{equation}\label{subsample}
L^{(S_k)}(w) = \frac{1}{|S_k|} \sum_{f_i \in S_k} f_i(w) + h(w)
\end{equation}
as an approximation to the gradient of the loss function (\ref{bigERM}), and take a step using an empirically 
determined small and decreasing step size. In variance-reduced versions of SGD such as SVRG \cite{JZ}, it is common to use a constant step size, determined through experimentation. \textcolor{black}{The curvature-adaptive step size has two  desirable properties in this setting: it eliminates the need to select a step size through ad-hoc experimentation, and it incorporates second-order information, which is currently not exploited by most stochastic algorithms.} 


\textcolor{black}{Since the time of the initial writing of this article, related work on stochastic quasi-Newton methods has appeared in the machine learning literature. In particular, the curvature-adaptive step size was extended to stochastic gradient descent and stochastic BFGS in \cite{ZGG}. A complete discussion of stochastic optimization, and the content of \cite{ZGG}, is beyond the scope of this article. However, we have performed several preliminary experiments with stochastic versions of adaptive methods, which are presented in \Cref{apdx:stoch}, along with a summary of the relevant theory from \cite{ZGG}. These experiments (see \Cref{fig:stoch}) demonstrate that stochastic adaptive BFGS can be quite effective for solving stochastic problems.}

There is currently also little work on algorithms exploiting the finite sum structure (\ref{bigERM}), which can provably attain superlinear convergence. Aside from \cite{ZGG}, we are only aware of the Newton Incremental Method (NIM) of Rodomanov and Kropotov \cite{RK}, and the DiSCO method of Zhang and Xiao \cite{ZX}, both of which are based on Newton's method. These methods require additional memory of the order $O(N)$, which is often substantial. We are hopeful that use of the adaptive step size, and the principles behind it, will lead to new advances in the field of stochastic optimization.

\section*{Acknowledgements}
We would like to thank Jorge Nocedal for carefully reading and providing very helpful suggestions for improving an earlier version of this paper. \textcolor{black}{We also thank several anonymous referees for their comments and suggestions.}
\bibliographystyle{siam}
\raggedright	
\bibliography{adaptQN}

\appendix
\section{Stochastic Experiments}\label{apdx:stoch}
The experiments presented here are derived from the experiments in \cite[\S 4]{ZGG}. Several stochastic algorithms are tested on an \emph{online least-squares} problem of the form
$$\min_w \mathbb{E} (Y - X^Tw)^2 + \frac{1}{2}\lambda \|w\|^2.$$
\emph{Online} refers to the method of sampling: we can only access $(X,Y)$ by calling an oracle at each iteration $k$, which returns $|S_k|$ i.i.d instances of $(X,Y)$. The model for $(X,Y)$ has the following specification:
\begin{itemize}
	\item $X$ has a multivariate normal distribution $N(0, \Sigma)$, where $\Sigma$ is the covariance matrix of the \texttt{w8a} data set (see \Cref{tab:ds}).
	\item $Y = X^T\beta + \epsilon$, where $\beta$ is deterministic and sparse (80\% sparsity) and $\epsilon \sim N(0,1)$ is a noise component.
\end{itemize}

Since our model is based on the \texttt{w8a} data set, the dimension of $w$ is $p = 300$, and the regularizer is set to $\lambda = \frac{1}{p}$.

We compare the following algorithms. For a deterministic method $M$, the corresponding \emph{stochastic $M$ method} takes the step of the underlying $M$ method, but computed from the empirical objective function (\ref{subsample}) sampled at each iteration. The convergence of these methods\footnote{Note: the stochastic adaptive BFGS analyzed in \cite{ZGG} is slightly different, as it incorporates an additional Wolfe condition test.} is analyzed in \cite{ZGG}. In summary, the stochastic adaptive gradient descent method returns an $\epsilon$-optimal solution in expectation after $O(\log(\epsilon^{-1}))$ iterations when $|S_k|$ is chosen as a constant (depending on $\epsilon$), and stochastic adaptive BFGS converges $R$-superlinearly with probability 1 when $|S_k|$ increases superlinearly.
\begin{description}
	\item[SBFGS-A] The stochastic adaptive BFGS method. At each iteration, an adaptive BFGS step is computed from the empirical objective function (\ref{subsample}). The BFGS update is computed from the pair $(d_k, G_kd_k)$ which is more stable than using the pair $(s_k,y_k)$ with the difference $y_k$ of sampled gradients (see \cite{BHNS,GGR}).
	\item[SBFGS-1] The stochastic BFGS method with \emph{constant} step size $\alpha_1$. The step size $\alpha_1$ is given in \Cref{tab:stepsched}.
	\item[SN-A] Nesterov's stochastic damped Newton method \cite{N_ICP}.
	\item[SN-1] The stochastic Newton method with constant step size $\alpha_1$.
	\item[SGD-A] Stochastic adaptive gradient descent. 
	\item[SGD-$i$] Stochastic gradient descent with constant step size $\alpha_i$ for $i = 1,\ldots,4$ (\Cref{tab:stepsched}).
\end{description}

\begin{table}
	\begin{tabular}{|c|c|} \hline
		& Value \\ \hline
		$\alpha_1$ & $\frac{1}{140,000} \approx \texttt{7.14e-6}$ \\ \hline
		$\alpha_2$ & \texttt{5e-6} \\ \hline
		$\alpha_3$ & \texttt{2e-6} \\ \hline
		$\alpha_4$ & \texttt{1e-6} \\ \hline
	\end{tabular}
	\caption{Constant step sizes.}
	\label{tab:stepsched}
\end{table}

The theory \cite{ZGG} suggests taking an increasing number of samples for stochastic adaptive methods. For SBFGS-A, SBFGS-1, SN-A, SN-1, and SGD-A, we use $|S_k| = \frac{1}{2}p\cdot (1.05)^{\floor{\frac{k}{50}}}$. That is, the number of samples starts at $\frac{1}{2}p = 150$ and increases by 5\% every 50 iterations. For SGD-$i$ methods, we test three different constant batch sizes: a \emph{small} batch of $|S_k| = \frac{1}{2}p$, a \emph{medium} batch of $|S_k| = p$, and a \emph{large} batch of $|S_k| = 4p$.

\begin{figure}
	\centering
	\includegraphics[clip=true, scale=0.7]{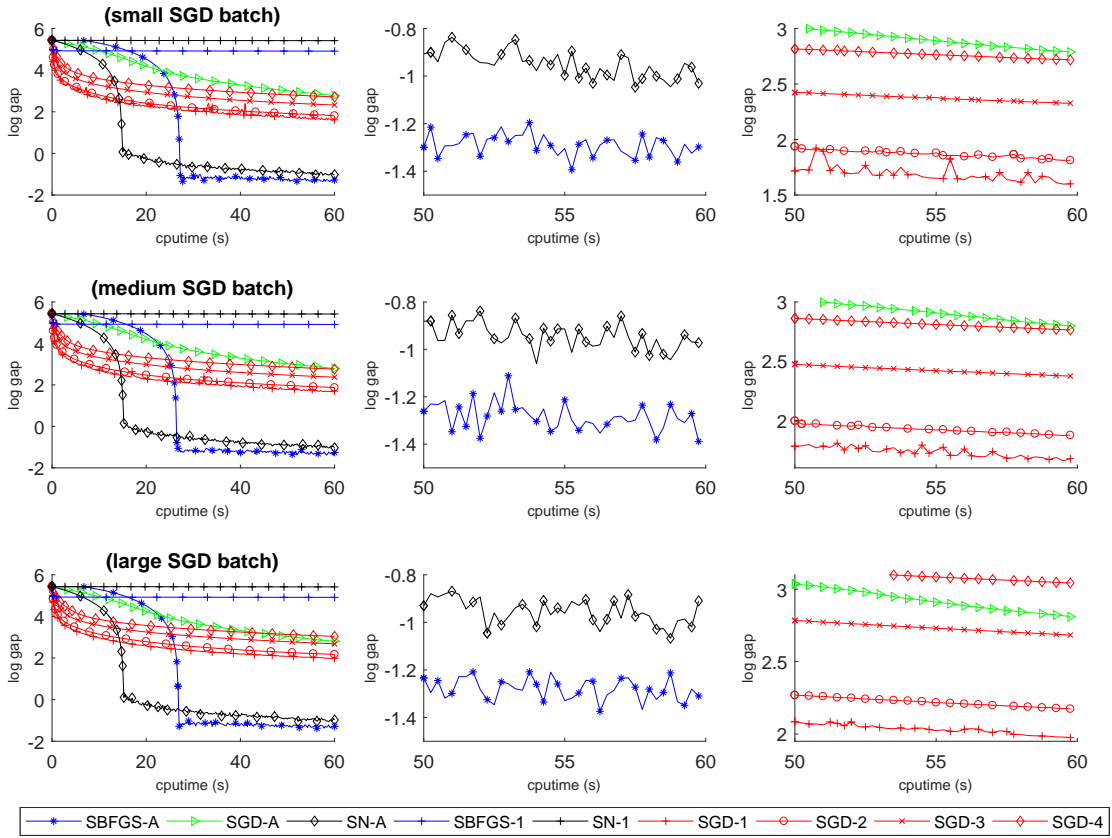}
	\caption{Performance of the stochastic algorithms. In the top row, the SGD methods SGD-1, SGD-2, SGD-3, SGD-4 use small batches ($|S_k| = \frac{1}{2}p$). Likewise, the second and third row use medium and large batches, respectively. The first column shows the performance of each method in 60s of CPU time, and the second and third columns show a close-up of the last 10s (50s-60s).}
	\label{fig:stoch}
\end{figure}

The results of the experiments are shown in \Cref{fig:stoch}. As before, the \emph{log gap} is $\log_{10}(f(x_k) - f(x_\ast))$, where $x_\ast$ is the true minimizer ($x_\ast$ can be computed explicitly given $\Sigma$ and $\beta$). The plots in the first column shows the trajectory of each method in 60 seconds of CPU time; the second and third columns show the final 10 seconds (from 50s to 60s) in greater detail. The starting point in all trials was $w = 0$.

Both SBFGS-A and SN-A exhibit superlinear convergence once they approach the minimizer. Curiously, SBFGS-A attains greater accuracy than SN-A using the same sample sizes (see second column of \Cref{fig:stoch}); we suspect that the noisiness of sampling $G_k$ damages SN-A. These methods greatly outperform SGD, even with well-tuned step sizes. We note that SGD is quite sensitive to the choice of step size. A constant step size cannot be made much larger than $\alpha_1$; using $\frac{1}{130,000} \approx \texttt{7.69e-6}$ causes SGD (even with large batches) to immediately diverge. In fact, we can check that the largest eigenvalue of $\Sigma$ is approximately \texttt{1.32e5}. Furthermore, the superior performance of SBFGS-A and SN-A depends at least partially on the curvature-adaptive step size. The methods SBFGS-1 and SN-1, which use the constant step size $\alpha_1$, converge extremely slowly\footnote{It is possible to use even larger step sizes with these methods. We observed that stochastic BFGS and stochastic Newton can tolerate much larger constant step sizes than $\alpha_1$ without diverging wildly as SGD does. However, for stochastic BFGS, the performance is not better, and is usually much worse than using $\alpha_1$, as the algorithm escapes to a worse region before beginning to decrease slowly.}, so the success of SBFGS-A and SN-A is not solely due to the second-order information in $H_k$.

SGD-A was slower than SGD with tuned step sizes. We found that the initial adaptive step size was \texttt{1e-8}, which explains the relatively slow convergence of SGD-A. It is also worth noting that even with a small initial sample, SGD-A never produced an overly large step size causing it to diverge or oscillate, something which is not strictly guaranteed by the theory.

We have not touched on the subject of variance reduction, which is generally crucial, though not particularly relevant when considering the results in \Cref{fig:stoch}. Good variance reduction techniques will be important for designing an effective, general-purpose solver based on SBFGS-A, SN-A, or indeed, any other of the stochastic algorithms tested.
\end{document}